\DeclareMathAlphabet{\cmcal}{OMS}{cmsy}{m}{n}
\newtheoremstyle{thm}
  {3pt}
  {3pt}
  {\em}
  {0pt}
  {\bfseries}
  {}
  {5pt}
  {}
\newtheoremstyle{rem}
  {3pt}
  {3pt}
  {}
  {0pt}
  {\bfseries}
  {.}
  {5pt}
  {}
\newtheorem{thm}{Theorem}[section]
\newtheorem{cor}[thm]{Corollary}
\newtheorem{lem}[thm]{Lemma}
\newtheorem{prop}[thm]{Proposition}
\newtheorem{conj}[thm]{Conjecture}
\theoremstyle{definition}
\theoremstyle{rem}
\newtheorem{rem}[thm]{{Remark}}
\numberwithin{equation}{section} \numberwithin{table}{section}
\newtheorem*{thm*}{Theorem}
\newtheorem*{rem*}{Remark}
\newtheorem*{rems*}{Remarks}
\newtheorem*{exam*}{Example}
\newtheorem*{exams*}{Examples}
\newcommand{\neutralize}[1]{\expandafter\let\csname c@#1\endcsname\count@}
\newcommand{\Q}{{\mathbb{Q}}}
\newcommand{\R}{{\mathbb{R}}}
\newcommand{\bU}{{\mathbb{U}}}
\newcommand{\Z}{{\mathbb{Z}}}
\newcommand{\cF}{{\cmcal{F}}}
\newcommand{\cG}{{\cmcal{G}}}
\newcommand{\cO}{{\cmcal{O}}}
\newcommand{\cP}{{\cmcal{P}}}
\newcommand{\cQ}{{\cmcal{Q}}}
\newcommand{\cT}{{\cmcal{T}}}
\newcommand{\rH}{\mathrm{H}}
\def\n2Z{\frac{1}{n^2}\Z/\Z}
\def\a{\alpha}
\def\b{\beta}
\def\d{\delta}
\def\z{\zeta}
\newcommand{\Gm}{{\mathbb{G}}_{m}}
\newcommand{\zmod}[1]{{\Z/{#1}\Z}}
\newcommand{\modz}[1]{{{\frac{1}{#1}\Z}/\Z}}
\newcommand{\ra}{\rightarrow}
\newcommand{\arinj}{\ar@{^(->}}
\newcommand{\arsurj}{\ar@{->>}}
\newcommand{\arsub}{\ar@{}[r]|-*[@]{\subset}}
\newcommand{\arsup}{\ar@{}[r]|-*[@]{\supset}}
\newcommand{\arcap}{\ar@{}[d]|-*[@]{\subset}}
\newcommand{\arcup}{\ar@{}[u]|-*[@]{\subset}}
\newcommand{\arin}{\ar@{}[u]|-*[@]{\in}}
\renewcommand{\Im}{\operatorname{Im}}
\newcommand{\Ker}{\operatorname{Ker}}
\newcommand{\Ext}{{\operatorname{Ext}}}
\newcommand{\Hom}{{\operatorname{Hom}}}
\newcommand{\Pic}{{\operatorname{Pic}}}
\newcommand{\Spec}{{\operatorname{Spec}}}
\newcommand{\Inv}{{\operatorname{Inv}}}
\renewcommand{~}{\hspace{0.4mm}}
\newcommand{\ts}{{\textsection}}
\newcommand{\bd}{\boldsymbol}
\newcommand{\wg}{\wedge}
\def\tkf{\widetilde{k_n(f)}}
\mathchardef\hyp="2D
\newcommand{\xyv}[1]{\xymatrixrowsep{#1 pc}}
\newcommand{\xyh}[1]{\xymatrixcolsep{#1 pc}}
\newcommand{\sesdot}[3]{
  \xymatrix{
0 \ar[r] & #1 \ar[r] & #2 \ar[r] & #3 \ar[r] & 0.}}
\newcommand{\pair}[2]{\langle #1, \,#2 \rangle}
\newcommand{\legendre}[2]{\genfrac{(}{)}{}{}{#1}{#2}}
\newcommand{\vio}[1]{{\color{black} #1}}
\def\<{\langle }
\def\>{\rangle}
\def\S{\Sigma}
\def\bF{\bar{F}}
\def\nZ{\frac{1}{n}\mathbb{Z}/\mathbb{Z}}
\def\d{\delta}
\def\ms{\medskip}
\def\lk{\ell k}
\def\invlim{\varprojlim}
\def\znn{\zmod n \times_S \mu_{n^2}}
\def\bU{\partial U}
\def\Cl{{ {Cl}}}
\def\calL{{\cmcal {L}}}
\renewcommand{\O}{{\cmcal O}}
\begin{document}                                                                          

\title{Abelian arithmetic Chern-Simons theory and arithmetic linking numbers}

\author{Hee-Joong Chung}
\address{ H.J.C.: Department of Physics, Pohang University of Science and Technology, 77 Cheongam-ro, Nam-gu, Pohang, Gyeongbuk, Republic of Korea 37673, 
and
Korea Institute for Advanced Study, 85 Hoegiro, Dongdaemun-gu, Seoul 02455, Republic of Korea
}
\author{Dohyeong Kim}
\address{D.K.: Department of
Mathematics, University of Michigan,
2074 East Hall,
530 Church Street,
Ann Arbor, MI 48109-1043 , U.S.A. }
\author{Minhyong Kim}
\address{M.K.: Mathematical Institute, University of Oxford,
Woodstock Road, Oxford OX2 6GG, U.K., and Korea Institute for Advanced Study,
85 Hoegiro, Dongdaemun-gu,
Seoul 02455,
Republic of Korea }
\author{George Pappas}
\address{G.P.: Department of Mathematics,
Michigan State University,
East Lansing, MI 48824, U.S.A.}
\author{Jeehoon Park}
\address{J.P.: Department of Mathematics, 
Pohang University of Science and Technology,
77 Cheongam-ro, Nam-gu, Pohang, Gyeongbuk, Republic of Korea 37673 }
\author{Hwajong Yoo}
\address{H.Y.: IBS Center for Geometry and Physics,
Mathematical Science Building, Room 108,
Pohang University of Science and Technology,
77 Cheongam-ro, Nam-gu, Pohang, Gyeongbuk, Republic of Korea 37673 }
\subjclass[2000]{Primary 11R04, 11R23, 11R29 ; Secondary 81T45}                                                                        
\maketitle

\begin{abstract}
Following the method of Seifert surfaces in knot theory, we define  arithmetic linking numbers and height pairings of ideals using arithmetic duality theorems, and compute them in terms of $n$-th power residue symbols. This formalism leads to a precise  arithmetic analogue of a `path-integral formula' for linking numbers.\end{abstract}


\section{Introduction}\label{intro}
Let  $M$ be an oriented three-manifold  without boundary and $\a_1$ and $\a_2$  two knots that are homologically equivalent to zero in it. One way of computing the linking number of $\a_1$ and $\a_2$ uses the formula
$$\lk (\a_1, \a_2)=\<\S_{\a_1 },\a_2 \>,$$
where $\S_{\a_1}$ is a Seifert surface for $\a_1$ transversal to $\a_2$ and $\<\S_{\a_1 },\a_2 \>$ is the oriented intersection number. It is also suggestive to write this equality as
$$\lk (\a_1,\a_2)=\<d^{-1}\a_1, \a_2\>,$$
$d$ denoting the exterior derivative of currents. The pairing on the right is independent of the choice of (smooth, transversal) inverse image: because de Rham cohomology computed by forms and currents is the same, the ambiguity can be represented by closed 1-forms, which then integrate to zero on $\a_2$, since the latter is assumed to be homologically equivalent to zero.
\ms

We can also define a pairing between two 1-forms $A_1$ and $A_2$ by
$$(A_1, A_2):=\pair{A_1}{dA_2}:=\int_M A_1 \wg dA_2.$$
Since
$$d(A_1\wg A_2)=dA_1\wg A_2-A_1\wg dA_2,$$
we see right away that the pairing is symmetric by Stokes' theorem.

According to \cite{AK}, the \textit{Chern-Simons action}
\[
(A,A)=\int_M A \wg dA
\]
for a 1-form $A$ is related to the \textit{helicity} of a magnetic field. Indeed, if $M$ is a space-like slice of the spacetime $M\times \R$ and $A$ the electromagnetic potential, we have the equality

\[
\int_M A \wg dA=\int_M \Phi \cdot B ~d{\text{vol}},
\]
where $B$ is the magnetic field and $\Phi$ the magnetic vector potential.
\ms

Here is an aside about the meaning of the integral $\int_MA \wg dA$ as `helicity'. The choice of a volume form $d\text{vol}$ on $M$ determines an isomorphism  $V\mapsto i_V d\text{vol}$ from vector fields to 2-forms. The vector field $V$ corresponding to $dA$ will generate a flow so that we can consider the trajectory $\ell_p$ that starts from any given point $p$. Arnold and Khesin  \cite{AK} define an asymptotic linking number
$\lk(\ell_p,\ell_q)$ and prove a formula of the form
\[
\int_M A \wg dA = \int_{M} d^{-1}(i_Vd\text{vol})\wedge i_Vd\text{vol}=c\int_{M \times M} \lk(\ell_p,\ell_q) ~d\text{vol}_p ~d\text{vol}_q.
\]
That is, the helicity is an average asymptotic linking number between pairs of magnetic flows starting from two points in $M$.

\ms

Following Polyakov \cite{Pol} and Schwartz \cite{Sch}, they also discuss the formal `Gaussian' path integral 
\[
\int \exp{(-\pi \pair{A}{dA})}~DA = \det(*d)^{-\frac{1}{2}},
\]
where $*:\Omega^2_M\ra \Omega^1_M$ is the Hodge star operator with respect to a metric and the determinant is regularised\footnote{In this and the next formula, we will be somewhat vague with the precise definitions and computations, since we will not be using them in this paper except as inspiration. In particular, \cite{Sch} gives a careful discussion of the metric dependence and the possibility that $d$ has non-trivial kernel. Also, we have normalised the constants slightly differently.} \cite[p. 186]{AK}.
Adding a linear term pairing the forms with homologically trivial currents $\xi_i$,  we get (again formally)
\[
\int \exp {\left(-\pi\pair{A}{dA} +2\pi i \sum_{i} \pair {A}{\xi_i} \right)} ~DA
=  \det (*d)^{-\frac{1}{2}} \cdot \exp{ \left(-\pi \sum_{i, j} \pair {d^{-1}\xi_i}{\xi_j}\right)}.
\]
The pairings between currents on the right side are likely to be problematic in general. However, the case of interest is when the $\xi_i$ are (oriented) knots and the pairing with $A$ denotes an integral. The operator $d$ acts on  currents in a way compatible with boundary maps of singular chains. That is, if  $L, N$ are chains with
$\partial N=L$ and  $[L]$ and $[N]$ are the corresponding currents, then
$d[N]=[\partial N]=[L].$ 
Hence, if $\xi_i$ is a current corresponding to a homologically trivial knot, then $d^{-1}\xi_i$ will include a two-chain with boundary equal to $\xi_i$.  Thus,
 each term $\pair {d^{-1}\xi_i}{\xi_j}=\lk(\xi_i, \xi_j)$ will be a linking number. The integral is thereby viewed as a correlation between the `Wilson loop functionals'
$$A\mapsto \exp {\left( 2\pi i  \pair {A}{\xi_i} \right)}$$
associated to knots $\xi_i$ with respect to a Chern-Simons measure
$$\exp {\left(-\pi(A,A)\right)} ~DA.$$ In any case,
 the Gaussian integral with linear term provides  one elementary explanation of how linking numbers come up in Chern-Simons theory.
 \ms

The goal of this paper is to present some preliminary investigations on arithmetic analogues of the preceding discussion. That is, when $X=\Spec(\cO_F)$ for a totally imaginary number field $F$ that contains the group $\mu_{n^2}$ of $n^2$-th roots of unity, we use arithmetic duality theorems to define a two term complex
$$d: H^1(X, \zmod n)\ra \Ext^2_X(\zmod n, \Gm)$$
as a mod $n$ arithmetic analogue of the map $d:\Omega^1_M\ra \Omega^2_M$. The Ext group is isomorphic to $Cl(F)/n$, the ideal class group of $F$ mod $n$. Thus, every ideal $I$ has a mod $n$ class $$[I]_n\in  \Ext^2_X(\zmod n, \Gm),$$ and we define $I$ to be \textit{$n$-homologically trivial} if this class is in the image of $d$. On the other hand, there is a duality pairing
$$\<\cdot, \cdot \>: H^1(X, \zmod n)\times \Ext^2_X(\zmod n, \Gm)\ra \nZ, $$
and we define \textit{the arithmetic linking number} of two prime ideals $\cP$ and $\cQ$ that are $n$-homologically trivial by
$$\lk_n(\cP, \cQ):=\< d^{-1}\cP, \cQ\>.$$
Of course one needs to check that this is well-defined and symmetric. 
We verify this in Section \ref{section2}. In Section \ref{section3}, we generalise the definition to arithmetic linking numbers on $X_S:=\Spec(\cO_F[1/S])$ for a finite set of primes $S$. We will see (Corollary \ref{cor:3.11})  that this linking number can be computed in terms of $n$-th power residue symbols in a manner similar to Morishita's treatment in \cite{Mor}. (However, we do not carry out a direct comparison.) This pairing can be defined also for non-prime ideals, in which case we call it the {\em arithmetic  mod $n$ height pairing}, denoted by $ht_n(I,J)$.

Parallel to the pairing on 1-forms, we also define a pairing
$$(\cdot, \cdot): H^1(X,\zmod n)\times  H^1(X,\zmod n)\ra \nZ$$
as
$$(A,B)=\<A, d B\>$$
and
in such a way that $(A,A)$ is  the abelian arithmetic Chern-Simons function defined in \cite{Kim, CKKPY}. 

It is then pleasant to note  a precise analogue of the Gaussian path integral in this arithmetic setting.
\begin{thm} \label{thm:maintheorem}
Let $p$ be an odd prime,  $a=\dim H^1(X, \zmod p)$, $b=\dim \Ker(d)$, and $\{\xi_j\}$ a finite set of homologically $p$-trivial ideals. Denote by $\bar{d}$ the induced isomorphism
$$\bar{d}: H^1(X, \zmod p)/\Ker(d) \overset{\sim}{\ra} \Im(d).$$
Then
\begin{align*}
& \sum_{\rho\in H^1(X, ~\zmod p)} \exp[2\pi i ((\rho, \rho )+\sum_j\pair{\rho}{[\xi_j]_p})] \\
& =p^{(a+b)/2}\legendre{\det (\bar{d})}{p}i^{[\frac{(a-b)(p-1)^2}{4}]} \exp \left[-2\pi i \left( \frac{1}{4}\sum_{i, ~j}ht_p(\xi_i, \xi_j) \right) \right].
\end{align*}
\end{thm}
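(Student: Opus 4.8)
The plan is to recognize the left-hand side as a finite Gauss sum over the $\F_p$-vector space $V:=H^1(X,\zmod p)$, attached to the quadratic form $q(\rho):=(\rho,\rho)$ and the linear form $\ell(\rho):=\sum_j\pair{\rho}{[\xi_j]_p}$, and to evaluate it by the classical three-step recipe for quadratic Gauss sums over finite-dimensional $\F_p$-spaces: peel off the radical of the associated bilinear form, complete the square, and compute the surviving nondegenerate Gauss sum as a product of one-variable quadratic Gauss sums. Throughout, the perfectness of the arithmetic duality pairing $\pair{\cdot}{\cdot}$ and the symmetry of $(\cdot,\cdot)$ established in Section \ref{section2} will play the role of the ``Poincar\'e duality'' that converts linear-algebra determinants into the invariant $\det(\bar d)$ in the statement.

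First I would identify the radical of $(\cdot,\cdot)$ with $\Ker(d)$: if $dA=0$ then $(A,B)=(B,A)=\pair{B}{dA}=0$ for all $B$, while conversely $(A,B)=0$ for all $B$ forces $\pair{B}{dA}=0$ for all $B$, hence $dA=0$ by perfectness of $\pair{\cdot}{\cdot}$. Therefore $q$ and its polar form descend to a nondegenerate quadratic form $\bar q$ on $W:=V/\Ker(d)$, a space of dimension $a-b$, and $\bar d\colon W\xrightarrow{\sim}\Im(d)$; a similar computation plus a dimension count gives $\Im(d)=\Ker(d)^{\perp}$ inside $\Ext^2_X(\zmod p,\Gm)\cong V^{\vee}$. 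Using that each $\xi_j$ is homologically $p$-trivial I write $[\xi_j]_p=d\mu_j$; then for $\eta\in\Ker(d)$ one has $\pair{\eta}{[\xi_j]_p}=\pair{\eta}{d\mu_j}=(\eta,\mu_j)=0$, so $\ell$ annihilates $\Ker(d)$ and descends to $\bar\ell$ on $W$. Choosing a splitting $V\cong W'\oplus\Ker(d)$, the inner sum over $\Ker(d)$ is then trivial and contributes a factor $p^{b}$, reducing matters to $\sum_{\bar\rho\in W}\exp[2\pi i(\bar q(\bar\rho)+\bar\ell(\bar\rho))]$.

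Next I would fix a basis $e_1,\dots,e_{a-b}$ of $W$, set $M:=\big((e_i,e_j)\big)_{i,j}$, and complete the square (legitimate because $p$ is odd): the sum over $W$ equals $\exp[-2\pi i\cdot\tfrac14\,\ell^{\mathsf T}M^{-1}\ell]\cdot\sum_{y\in W}\exp[2\pi i\,y^{\mathsf T}My]$. Writing $\mu:=\sum_j\mu_j$ one has $\ell(\rho)=(\rho,\mu)$, so in coordinates $\ell=Mu$ with $u$ the coordinate vector of the image of $\mu$ in $W$; hence $\ell^{\mathsf T}M^{-1}\ell=u^{\mathsf T}Mu=(\mu,\mu)=\sum_{i,j}(\mu_i,\mu_j)=\sum_{i,j}ht_p(\xi_i,\xi_j)$, using the description $ht_p(\xi_i,\xi_j)=\pair{\mu_i}{d\mu_j}$ of the mod $p$ height pairing (well defined since $[\xi_j]_p\in\Im(d)=\Ker(d)^{\perp}$). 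This is exactly the exponential factor $\exp[-2\pi i(\tfrac14\sum_{i,j}ht_p(\xi_i,\xi_j))]$ in the statement. For the remaining sum I would diagonalize $M=P^{\mathsf T}DP$ over $\F_p$ and apply the one-variable quadratic Gauss sum $\sum_{x\in\F_p}\exp[2\pi i\,ax^2/p]=\legendre{a}{p}g$ with $g=\sqrt p\cdot i^{(p-1)^2/4}$ (checked against the two residues of $p$ modulo $4$), obtaining $\sum_{y\in W}\exp[2\pi i\,y^{\mathsf T}My]=\legendre{\det D}{p}g^{\,a-b}=\legendre{\det M}{p}\,p^{(a-b)/2}\,i^{(a-b)(p-1)^2/4}$ because $\det M=(\det P)^2\det D$. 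Multiplying by the factor $p^{b}$ from the previous step produces the prefactor $p^{(a+b)/2}$.

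The one genuinely delicate point — and the step I expect to be the main obstacle — is to make sense of $\det(\bar d)$ and to prove $\legendre{\det(\bar d)}{p}=\legendre{\det M}{p}$, since $\bar d$ maps between the two different spaces $W$ and $\Im(d)$. Here I would use the perfect pairing to identify $\Im(d)$ with $W^{\vee}$ and verify that, in the basis $\{e_i\}$ of $W$ together with its dual basis of $W^{\vee}$, the matrix of $\bar d\colon W\to W^{\vee}$ is precisely $M=\big(\pair{e_i}{\bar d e_j}\big)_{i,j}$; a change of basis of $W$ by $P$ replaces $M$ by $P^{\mathsf T}MP$, so $\det M$ modulo squares — and hence $\legendre{\det(\bar d)}{p}$ — is basis-independent and well defined. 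Once this bookkeeping is settled, assembling the factors from the previous paragraphs gives the asserted identity.
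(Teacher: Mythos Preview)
Your proposal is correct and follows the same architecture as the paper's proof: first observe that both the quadratic term $(\rho,\rho)$ and the linear term $\sum_j\pair{\rho}{[\xi_j]_p}$ descend to $W=H^1(X,\zmod p)/\Ker(d)$ (the paper invokes Corollary~\ref{cor: annihilator} for this, which is exactly your radical computation), pull out the factor $p^b$, and then evaluate the remaining nondegenerate Gauss sum over $W$. The only difference is that the paper stops at this point and cites \cite[Chap.~9, Prop.~3.2]{Ner} for the Gaussian-sum identity, whereas you carry out that computation by hand (completing the square to extract $\exp[-2\pi i\cdot\tfrac14\sum_{i,j}ht_p(\xi_i,\xi_j)]$, diagonalising, and assembling one-variable Gauss sums); your discussion of why $\det(\bar d)$ is well-defined modulo squares and equals $\det M$ modulo squares also matches the remark the paper makes just after the theorem statement.
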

The determinant requires some commentary. The map $\bar{d}$ goes from
$H^1(X, \zmod p)/\Ker (d)$ to its dual, since $\Ker (d)$ is the exact annihilator of $\Im(d)$. It is an easy exercise to check that the determinant is then well-defined modulo squares in $\zmod p$. (It is just the discriminant of the corresponding quadratic form.) Hence, its Legendre symbol is well-defined. This formula is essentially a formal consequence of the definitions. However, it does give indication that some notion of `quantisation' for  arithmetic Chern-Simons theory might not be entirely empty and, furthermore, provide new interpretations of basic arithmetic invariants.

\ms
In Section \ref{section4}, following up on the ideas of \cite{{BCGKPT}}, we will also show how to realize the arithmetic linking pairing in the compact case by a simple construction that only involves Artin reciprocity and the `class invariant homomorphism',
which gives a measure of the Galois structure of unramified Galois extensions.
More precisely, we show that under the class field theory isomorphism $(\Cl(F)/n)^\vee\simeq H^1(X,\Z/n\Z)$ the map
\[
d: H^1(X,\Z/n\Z)\to \Ext^2_X(\zmod n, \Gm)\simeq
H^1(X,\Z/n\Z)^\vee
\]
giving $(\cdot,\cdot)$ is identified with the class invariant homomorphism
\[
(\Cl(F)/n)^\vee=\Hom(\Cl(F), \Z/{n\Z})\to \Cl(F)/n.
\]
By definition, this sends the Artin map of a $\Z/n\Z$-unramified extension $K/F$ to the class
of the (locally free) $\O_F$-submodule of $K$ consisting of $v\in K$ such that $a(v)=\zeta^a v$. 
Regarding Chern-Simons functionals, the first computation in terms of the Artin map was in \cite{{BCGKPT}}. Martin Taylor observed a relation to the class invariant homomorphism when $n=2$, while Romyar Sharifi pointed out a connection to Bockstein maps.
\ms

As mentioned already, many of the ideas of the current paper were explored in various forms and in considerable depth by \cite{Mor}. What we view as the main contribution here, as in \cite{Kim, CKKPY}, is an attempt to move beyond analogies to a precise correspondence of constructions and techniques used in topology (especially the ideas inspired by topological quantum field theory) and in arithmetic geometry. What is achieved is obviously modest. But we hope it is suggestive.

\section{Arithmetic linking numbers in the compact case: proof of Theorem \ref{thm:maintheorem}}\label{section2}
Let $F$ be a totally imaginary algebraic number field with ring of integers $\cO_F$  such that $\mu_{n^2}\subset F$, and let $X=\Spec(\cO_F)$. We fix a trivialisation of the $n$-th roots
$$\z: \zmod n\simeq \mu_n.$$
We have various isomorphisms
$$\z_* : H^i(X,\zmod n)\simeq H^i(X, \mu_n);$$
$$\z^* : \Ext^i_X(\zmod n, \Gm)\simeq \Ext_X^i(\mu_n, \Gm).$$
Let $\pi:=\pi_1(X, b)$, where $b:\Spec(\bF)\ra\Spec(\cO_F)$ is the geometric point coming from an algebraic closure $\bF$ of $F$. For any natural number $n$, 
we have the isomorphism $$\Inv: H^3(X, \mu_n)\simeq \nZ$$
and a
  perfect pairing \cite{Maz}
$$\<\cdot, \cdot \>: H^i(X, \cF)\times \Ext^{3-i}_X(\cF, \Gm)\ra H^3(X,\mu_n)\simeq \nZ$$
for any $n$-torsion sheaf $\cF$ in the \'etale topology\footnote{The pairing usually goes to $H^3(X, \Gm)\simeq\Q/\Z$. But the statement that it is perfect  means it induces an isomorphism $$\Ext^{3-i}_X(\cF, \Gm)\simeq \Hom(H^i(X, \cF), H^3(X, \Gm)).$$  But $H^i(X, \cF)$ is $n$-torsion, which means that the image of any homomorphism lies in the $n$-torsion subgroup $H^3(X, \Gm)[n]\simeq H^3(X, \mu_n)$.} .
\ms


The cup product
$$\cup: H^1(X, \zmod n)\times H^2(X, \mu_n)\ra H^3(X, \mu_n)\simeq \nZ$$
induces a map
$$r: H^2(X, \mu_n)\ra \Ext^2_X(\zmod n, \Gm)$$
such that
$$\Inv(a\cup b)=\pair{a}{r(b)}.$$
The Bockstein operator 
$$\d:  H^1(X, \mu_n)\ra H^2(X, \mu_n)$$
comes from the exact sequences of sheaves
$$0\ra \mu_n\ra \mu_{n^2} \ra \mu_n\ra 0.$$
Define the  coboundary map $d$ as the composition
$$d: H^1(X,\zmod n)\stackrel{\z_*}{\simeq}H^1(X,\mu_n)\stackrel{\d}{\ra} H^2(X, \mu_n)\stackrel{r}{\ra} \Ext^2_X(\zmod n, \Gm).$$
We view the two-term complex
$$H^1(X, \zmod n)\stackrel{d}{\ra} \Ext^2_X(\zmod n, \Gm)$$
as a mod $n$ arithmetic analogue of the complex
$$\Omega^1_M\ra \Omega^2_M$$
for three-manifolds.  The idea that cohomology equipped with the Bockstein operation can have the nature of differential forms occurs in the theory of the de Rham-Witt complex for a variety in characteristic $p$: there, the de Rham-Witt differentials are sheaves of crystalline cohomology \cite{IR}.  Also, recall that the curvature of a connection is the obstruction to deforming a bundle along a deformation of the space on which it lives. The Bockstein operator is a small piece of the obstruction to deforming it along a deformation of the coefficients.
\ms

There is also a Bockstein operator
$$\d': H^1(X, \zmod n)\ra H^2(X, \zmod n)$$
associated with the exact sequence
$$0\ra \zmod n \ra \Z/n^2\Z\ra \zmod n\ra 0$$
and a Bockstein in degree 2,
$$\delta_2: H^2(X, \mu_n)\ra  H^3(X, \mu_n).$$
By choosing an isomorphism $\Z/n^2\simeq \mu_{n^2}$ compatible with $\z$, we see an equality of maps
$$\z_*\circ \d'=\d\circ \z_*: H^1(X, \zmod n)\ra H^2(X, \mu_n).$$
The following fact is of course well-known, but it seems to be hard to find a reference for \'etale cohomology.
\begin{lem}The Bockstein operator $\delta_2$ satisfies
$$
\delta_2(\alpha \cup \beta) = \delta' \alpha  \cup \beta - \alpha \cup \delta \beta
$$
for all $\alpha \in H^1(X,\zmod n)$ and $\beta \in H^1(X, \mu_n)$. 
\end{lem}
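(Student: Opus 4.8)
The plan is to reduce the identity to an explicit cocycle chase, after first arranging compatible cup products at the level of cochains; the reason the lemma is worth isolating is precisely that a clean reference for such a computation in étale cohomology is awkward to locate. Using the trivialisation $\z$ together with the compatible isomorphism $\Z/n^2\Z\simeq\mu_{n^2}$ fixed earlier (so that $\z_*\circ\delta'=\delta\circ\z_*$), we may identify $\mu_n\simeq\zmod n$ and $\mu_{n^2}\simeq\Z/n^2\Z$, whereupon the module pairing $\zmod n\otimes\mu_n\to\mu_n$ used in the cup product $H^1\times H^1\to H^2$ becomes ordinary multiplication $\zmod n\otimes\zmod n\to\zmod n$, and similarly for $\mu_{n^2}$. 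All three Bocksteins $\delta'$, $\delta$, $\delta_2$ then become the connecting maps attached, in the relevant degrees, to the single short exact sequence of sheaves $0\to\zmod n\overset{n}{\to}\Z/n^2\Z\to\zmod n\to 0$.

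First I would fix a functorial flasque resolution $\cF\mapsto C^\bullet(\cF)$ computing étale cohomology (the Godement resolution will do, since it is an exact functor and the étale topos of $\Spec(\cO_F)$ has enough points), so that the displayed sheaf sequence yields a short exact sequence of cochain complexes. Then I would equip these resolutions with a cochain-level cup product $C^i(\cF)\otimes C^j(\cG)\to C^{i+j}(\cF\otimes\cG)$ that induces the usual cup product in cohomology, satisfies the graded Leibniz rule $\partial(x\cup y)=\partial x\cup y+(-1)^{|x|}x\cup\partial y$, and is compatible with the reduction map $\Z/n^2\Z\to\zmod n$ (a ring homomorphism).

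With this in place the computation is routine. Represent $\alpha$ by a cocycle $a\in Z^1(C^\bullet(\zmod n))$ and lift it to $\hat a\in C^1(C^\bullet(\Z/n^2\Z))$; then $\partial\hat a=n\,a_1$ for some $a_1\in C^2$, and $a':=a_1\bmod n$ is a cocycle representing $\delta'\alpha$. Do likewise for $\beta$: a cocycle $b\in Z^1$, a lift $\hat b$, with $\partial\hat b=n\,b_1$ and $b':=b_1\bmod n$ representing $\delta\beta$. Since reduction mod $n$ is a ring map, $\hat a\cup\hat b$ reduces to the cocycle $a\cup b$ representing $\alpha\cup\beta$, so $\delta_2(\alpha\cup\beta)$ is read off from $\tfrac1n\,\partial(\hat a\cup\hat b)\bmod n$. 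The Leibniz rule gives
\[
\partial(\hat a\cup\hat b)=\partial\hat a\cup\hat b-\hat a\cup\partial\hat b=n\,a_1\cup\hat b-n\,\hat a\cup b_1=n\bigl(a_1\cup\hat b-\hat a\cup b_1\bigr),
\]
and hence $\delta_2(\alpha\cup\beta)$ is represented by $\overline{a_1\cup\hat b-\hat a\cup b_1}=a'\cup b-a\cup b'$, which represents $\delta'\alpha\cup\beta-\alpha\cup\delta\beta$, as claimed.

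The one non-formal ingredient — and the step I expect to be the main obstacle — is producing the cochain-level cup products on the chosen resolutions with the required Leibniz and ring-compatibility properties, i.e. a homotopy-multiplicative structure refining the sheaf-level pairings; once that is granted, the rest is bookkeeping. If one wishes to avoid explicit resolutions altogether, an alternative is to argue in the derived category: write $\delta'=(\cdot)\cup e$ and $\delta,\delta_2=(\cdot)\cup e'$ for the extension classes $e\in\Ext^1_X(\zmod n,\zmod n)$ and $e'\in\Ext^1_X(\mu_n,\mu_n)$ of the two sequences, invoke associativity of the Yoneda/cup product, and observe that $e$ and $e'$ correspond to one another under the pairing $\zmod n\otimes\mu_n\to\mu_n$; the minus sign is then exactly the Koszul sign incurred in moving the degree-one class $e'$ past the degree-one class $\alpha$.
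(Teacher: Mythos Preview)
Your argument is correct, and at its core it is the same computation as the paper's: lift cocycles, apply the Leibniz rule, read off the Bockstein of the product. The difference lies in the cochain model. The paper exploits the fact that $X=\Spec(\cO_F)$ is affine to pass to \v{C}ech cohomology for a sufficiently fine \'etale cover, where the cup product has an explicit Alexander--Whitney formula satisfying Leibniz on the nose; the verification then becomes a direct manipulation of \v{C}ech cochains $(\tilde\alpha_{ij})$, $(\tilde\beta_{ij})$ and their coboundaries. This sidesteps precisely the issue you flag as the main obstacle --- producing a homotopy-multiplicative structure on a general flasque resolution --- at the cost of being tied to the affine hypothesis. Your Godement or derived-category route is more portable (it would work over a non-affine base) and arguably more conceptual, especially the observation that $\delta'$ and $\delta$ are cup products with a degree-one extension class and the sign is pure Koszul; but it does require one to accept or supply the cochain-level product, which the \v{C}ech model gives for free.
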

\begin{proof}
Since $X$ is affine, the \'etale cohomology groups are isomorphic to the \v{C}ech cohomology groups (cf. \cite[Theorem 10.2]{Mil2}). Thus, we can check the above formula using \v{C}ech cocycles (cf. \cite[\ts 22]{Mil2}).

Choose a sufficiently fine \'etale covering $(U_i)_{i\in I}$ of $X$. Define $U_{ij} = U_i \times_X U_j$, $U_{ijk}= U_i \times_X U_j \times_X U_k$ and so on. Typical elements of the index set $I$ are denoted by $i,j,k,$ and $l$. Represent $\alpha$ and $\beta$ as \v{C}ech cocycles $(\alpha_{ij})$ and $(\beta_{ij})$. For any pair $(i,j)$ of distinct elements in $I$, choose a lifting $\tilde \alpha_{ij}$ of $\alpha_{ij}$ to $\zmod {n^2}$, and similarly a lifting $\tilde \beta_{ij}$ of $\beta_{ij}$ to $\mu_{n^2}$. The class of $\delta' \alpha$ can be represented by the $2$-cocycle whose section over $U_{ijk}$ is $(\delta' \alpha)_{ijk} := \tilde \alpha_{ij}|_{U_{ijk}} + \tilde \alpha_{jk}|_{U_{ijk}} - \tilde \alpha_{ik}|_{U_{ijk}}$ which takes values in $\zmod n \subset \zmod{n^2}$. We represent $\delta \beta$ in a similar way.

The cup product $\delta' \alpha \cup \beta$ is represented by a family of sections
$
\gamma'_{ijkl} = (\delta' \alpha)_{ijk}|_{U_{ijkl}} \otimes \beta_{kl}|_{U_{ijkl}}
$
and similarly $\alpha \cup \delta \beta$ is represented by
$
\gamma_{ijkl} = \alpha_{ij}|_{U_{ijkl}} \otimes (\delta \beta)_{jkl}|_{U_{ijkl}}.
$
On the other hand, we have
$$(\alpha \cup \beta)_{ijk} = \alpha_{ij}|_{U_{ijk}} \otimes \beta_{jk}|_{U_{ijk}}$$
which lifts to $\tilde \alpha_{ij}|_{U_{ijk}} \otimes \tilde\beta_{jk}|_{U_{ijk}}$ with values in $\zmod {n^2} \otimes \mu_{n^2} \simeq \mu_{n^2}$. A $\mu_n$-valued cocycle representing $\delta_2(\alpha \cup \beta)$ takes the form
$$
(\delta_2(\alpha\cup\beta))_{ijkl} := 
\left(\tilde \alpha_{jk} \cdot \tilde\beta_{kl}|_{U_{jkl}}\right)|_{U_{ijkl}} 
- \left(\tilde \alpha_{ik}\cdot \tilde\beta_{kl}|_{U_{ikl}}\right)|_{U_{ijkl}} 
+\left(\tilde \alpha_{ij}\cdot \tilde\beta_{jl}|_{U_{ijl}}\right)|_{U_{ijkl}} 
- \left(\tilde \alpha_{ij}\cdot  \tilde\beta_{jk}|_{U_{ijk}}\right)|_{U_{ijkl}} 
$$
where the isomorphism $\zmod {n^2} \otimes \mu_{n^2} \simeq \mu_{n^2}$ sends $a \otimes b \mapsto a \cdot b$ by viewing $\mu_{n^2}$ an additive group. Since $\alpha_{ij}$ and $\beta_{ij}$ are cocycles, $\tilde \alpha_{jk}|_{U_ijk} - \tilde \alpha_{ik}|_{U_ijk} = -\tilde \alpha_{ij}|_{U_ijk}+n\phi_{ijk}$ for some $\phi_{ijk}$ and similarly $\tilde \beta_{jl}|_{U_jkl} - \tilde \beta_{jk}|_{U_jkl} = \tilde \beta_{kl}|_{U_jkl} - n\psi_{jkl}$ for some $\psi_{jkl}$. Using these, the above simplifies to
$$
(\delta_2(\alpha\cup\beta))_{ijkl} = 
 \left((-\tilde \alpha_{ij} + n \phi_{ijk} )\cdot \tilde\beta_{kl}\right)|_{U_{ijkl}} 
+\left(\tilde \alpha_{ij}\cdot (\tilde \beta_{kl} - n \psi_{jkl}) \right)|_{U_{ijkl}} 
$$
$$= \left(n\phi_{ijk}\cdot \tilde\beta_{kl}\right)|_{U_{ijkl}} 
+\left(\tilde \alpha_{ij}\cdot (- n \psi_{jkl}) \right)|_{U_{ijkl}} 
$$
which is equal to $\gamma_{ijkl}' - \gamma_{ijkl}$ via the isomorphisms $\zmod n  \simeq n\Z / n^2\Z$ sending $a \mapsto na$, and $\mu_n \simeq \mu_{n^2}/\mu_{n}$ sending $\xi\mapsto \xi^{1/n}$. Hence we have shown the desired property of $\delta_2$. 
\end{proof}

Define
the pairings
$$(\cdot, \cdot ): H^1(X, \zmod n)\times H^1(X,\zmod n)\ra \nZ;$$
$$(\a,\b):=\pair{\a}{d\b}\in \nZ.$$

\begin{lem}\label{lem11}
The pairing is symmetric:
$$(\a, \b)=(\b,\a)$$
for all $\a, \b \in  H^1(X,\zmod n)$.
\end{lem}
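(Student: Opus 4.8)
The plan is to rewrite both $(\a,\b)$ and $(\b,\a)$ as the image under $\Inv$ of one and the same class in $H^3(X,\mu_n)$, using only the definitions of $d$, of $r$, of the duality pairing, and the already-recorded relation $\d\z_*=\z_*\d'$. First I would unwind the definitions: $(\a,\b)=\pair{\a}{d\b}=\pair{\a}{r(\d\z_*\b)}=\Inv(\a\cup\d\z_*\b)$, where $\cup$ denotes the cup product $H^1(X,\zmod n)\times H^2(X,\mu_n)\to H^3(X,\mu_n)$ attached to the pairing $\zmod n\otimes\mu_n\to\mu_n$. Replacing $\d\z_*\b$ by $\z_*\d'\b$ and pulling $\z$ out through the cup product (naturality in the coefficients), this becomes $\Inv\z_*(\a\cup\d'\b)$, where now $\a\cup\d'\b\in H^3(X,\zmod n)$ is formed from the multiplication pairing on $\zmod n$. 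By the identical computation, $(\b,\a)=\Inv\z_*(\b\cup\d'\a)$. So the lemma reduces to the symmetry $\a\cup\d'\b=\b\cup\d'\a$ in $H^3(X,\zmod n)$.

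To prove that, I would first note that the degree-$2$ Bockstein $\d_2'\colon H^2(X,\zmod n)\to H^3(X,\zmod n)$ attached to $0\to\zmod n\to\zmod{n^2}\to\zmod n\to 0$ vanishes. Indeed, by $\z$-compatibility it is identified with $\d_2$, and in the long exact sequence of $0\to\mu_n\to\mu_{n^2}\to\mu_n\to 0$ the image of $\d_2$ is the kernel of the map $H^3(X,\mu_n)\to H^3(X,\mu_{n^2})$ induced by $\mu_n\hookrightarrow\mu_{n^2}$; under the invariant isomorphisms $H^3(X,\mu_m)\simeq\frac1m\Z/\Z$, which are normalised compatibly as $m$ varies, this map is the inclusion $\frac1n\Z/\Z\hookrightarrow\frac1{n^2}\Z/\Z$, hence injective, so $\d_2=0$. (Only $\Inv\circ\d_2=0$ is actually used, which is the same computation.) Next, applying the previous Lemma with the $\mu_n$-class $\z_*\b$ in place of $\b$ gives $\d_2(\a\cup\z_*\b)=\d'\a\cup\z_*\b-\a\cup\d\z_*\b$; combining the vanishing of $\d_2$ with $\d\z_*\b=\z_*\d'\b$ and naturality again, I obtain $\d'\a\cup\b=\a\cup\d'\b$ in $H^3(X,\zmod n)$. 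Finally, graded-commutativity of the cup product gives $\d'\a\cup\b=(-1)^{2\cdot 1}\,\b\cup\d'\a=\b\cup\d'\a$, with no sign and no twist since multiplication on $\zmod n$ is a symmetric pairing. Chaining these, $\a\cup\d'\b=\d'\a\cup\b=\b\cup\d'\a$, whence $(\a,\b)=(\b,\a)$.

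The only genuinely delicate point, and the one I would be careful about, is the vanishing of $\d_2$ — more precisely, the fact that the invariant isomorphism $\Inv$ entering the duality pairing is the one normalised compatibly with the transition maps $\mu_n\to\mu_{n^2}$, which is exactly what makes $H^3(X,\mu_n)\to H^3(X,\mu_{n^2})$ injective. Once that is granted (it is built into the construction of these invariant maps, which all descend from $H^3(X,\Gm)$), the remainder is routine bookkeeping with cup products: naturality in the coefficients, graded commutativity, the relation $\d\z_*=\z_*\d'$, and the Bockstein–Leibniz identity of the previous Lemma.
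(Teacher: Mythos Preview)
Your proof is correct and follows essentially the same approach as the paper: both establish $\delta_2=0$, apply the Bockstein--Leibniz identity of the preceding lemma to get $\delta'\alpha\cup\beta=\alpha\cup\delta\beta$, and finish with graded commutativity of the cup product. The only variation is in the justification of $\delta_2=0$: the paper argues via the pro-sheaf $\Z_n(1)=\varprojlim_i\mu_{n^i}$ and the torsion-freeness of $H^3(X,\Z_n(1))\simeq\Z_n$, whereas you read it off directly from the injectivity of $H^3(X,\mu_n)\to H^3(X,\mu_{n^2})$ under the compatible invariant maps --- these are equivalent arguments.
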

\begin{proof}
This follows from examining the second Bockstein operator above.
$$\d_2: H^2(X, \mu_n)\ra H^3(X, \mu_n).$$
For the pro-sheaf $\Z_n(1):=\invlim_{i}\mu_{n^i}$, we have an exact sequence
$$0\ra \Z_n(1)\stackrel{n}{\ra}\Z_n(1)\ra \mu_n\ra 0.$$
Because $H^3(X, \Z_n(1))\simeq \Z_n$ is torsion-free,  the boundary map
$H^2(X, \mu_n)\ra H^3(X, \Z_n (1))$ is zero, and the map
$H^2(X, \Z_n(1))\ra H^2(X, \mu_n)$ is surjective. Hence, $H^2(X, \mu_{n^2})\ra H^2(X, \mu_n)$ is surjective, so that the map $\d_2$ is zero.

As a consequence, we have
$$0=\d_2(\a\cup \b)=\d'\a\cup \b -\a \cup \d \b.$$
Therefore,
$$(a,b)=\pair{a}{db}=\Inv(a\cup \d \z_*b)=\Inv(\d' a\cup \z_* b)=\Inv(\z_*(\d' a\cup  b))$$
$$=\Inv (\z_*(b\cup \d' a))=\Inv (b\cup \z_*\d' a))=
\Inv (b\cup \d\z_* a))=\pair{b}{da}=(b,a).$$
\end{proof}

Define $K=\Ker(d)$.
\begin{cor} \label{cor: annihilator}
If $a\in K$, then $(a,b)=0$ for all $b$.
\end{cor}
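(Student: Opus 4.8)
The plan is to deduce this immediately from the symmetry of the pairing established in Lemma \ref{lem11}. By definition $(a,b)=\pair{a}{db}$, and a priori the hypothesis $a\in K=\Ker(d)$ only tells us directly about the pairing in the other slot: since $da=0$ we have
$$
(b,a)=\pair{b}{da}=\pair{b}{0}=0 .
$$
First I would invoke Lemma \ref{lem11} to rewrite $(a,b)=(b,a)$, and then combine this with the displayed vanishing to conclude $(a,b)=0$ for every $b\in H^1(X,\zmod n)$.

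There is no real obstacle here: the content is entirely contained in the symmetry statement, which in turn rests on the vanishing of the degree-two Bockstein $\d_2$ proved in the course of Lemma \ref{lem11}. The only point worth a sentence of commentary is conceptual rather than technical, namely that this corollary identifies $K=\Ker(d)$ as lying in the radical of the symmetric pairing $(\cdot,\cdot)$; together with the fact (used in the statement of Theorem \ref{thm:maintheorem}) that $\Ker(d)$ is the exact annihilator of $\Im(d)$ under the perfect duality pairing, it shows that $(\cdot,\cdot)$ descends to a \emph{nondegenerate} symmetric pairing on $H^1(X,\zmod n)/K$, which is exactly the quadratic form whose discriminant appears as $\det(\bar d)$.
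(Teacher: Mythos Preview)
Your argument is correct and matches the paper's proof exactly: the paper writes simply that if $a\in K$ then $(a,b)=(b,a)=\pair{b}{da}=0$, which is precisely your use of Lemma~\ref{lem11} followed by $da=0$. Your additional paragraph of commentary is accurate but goes beyond what the paper includes in the proof.
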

\begin{proof} If $a\in K$, then
$(a,b)=(b,a)=\pair{b}{da}=0$. 
\end{proof}

According to  duality, we have
$\Ext^2_X(\zmod n, \Gm)\simeq H^1(X,\zmod n)^{\vee}\simeq Cl(X)/n$, where $Cl(X)$ is the ideal class group of $X$.
We will say an ideal $I\subset \cO_F$ is \textit{$n$-homologically trivial} if
its class in $\Ext^2_X(\zmod n, \Gm)$ is in the image of $d$. Even though there is some danger of confusion, when $n$ is fixed for the discussion, we will also allow ourselves merely to say that $I$ is  `homologically trivial'. If $I$ and $J$ are homologically trivial ideals, we define \textit{the mod $n$ height pairing} between $I$ and $J$ by
$$ht_n(I,J)=(d^{-1}[I]_n,[J]_n),$$
where $[I]_n$ denotes the class of $I$ in $Cl(X)/n$. 
Writing $[J]_n=d(b)$ for some $b\in H^1(X,\zmod n)$,  for any $a$ such that $da=0$, we have $\pair{a}{db}=(a, b )=0$ by Corollary \ref{cor: annihilator}. This implies that the mod $n$ height pairing is well-defined. 
Using the pairing on $H^1(X, \zmod n)$, note that we can also write the height pairing as
$$(d^{-1}[I]_n, d^{-1}[J]_n),$$
rendering the symmetry evident.
For two prime ideals $\cP$ and $\cQ$ (which are homologically trivial), 
we will also call their height pairing \textit{their linking number}, and denote it
$$\lk_n(\cP, \cQ):=ht_n(\cP, \cQ)=\<d^{-1}[\cP]_n, [\cQ]_n\>.$$
\ms

In the  papers \cite{Kim, CKKPY}, we fixed  a class $c \in H^3(\zmod n, ~\zmod n)$ and defined the \textit{arithmetic Chern-Simons action} for homomorphisms
$$\rho : \pi=\pi_1(X, b) \ra \zmod n$$ as
\[
CS_c(\rho) := \Inv (\z_*(j^3(\rho^*(c)))) \in \modz n,
\]
where 
$j^i : H^i(\pi, ~\zmod n) \ra H^i (X, ~\zmod n)$ is the natural map from group cohomology to \'etale cohomology (cf. \cite[Theorem 5.3 of Chap. I]{Mil}). 
We can also define the \textit{arithmetic Chern-Simons partition function} as
\[
Z_c(X) := \sum_{\rho \in \Hom (\pi, ~\zmod n)}~ \exp~(2\pi i \cdot CS_c(\rho)).
\]

The class $c:=Id \cup \tilde{\d}(Id)$ is a generator of $H^3(\zmod n,~\zmod n)$, where
$Id$ is the identity from $\zmod n$ to $\zmod n$ regarded as an element of $H^1(\zmod n, \zmod n)=\Hom(\zmod n, \zmod n)$ and $\tilde{\d} : H^1(\zmod n, ~\zmod n) \ra H^2(\zmod n, ~\zmod n)$ is a Bockstein operator induced from the  exact sequence
\[
\sesdot {\zmod n}{\zmod {n^2}}{\zmod n}
\]
There is a natural bijection between $\Hom (\pi, \zmod n)$ and $H^1(X,~\zmod n)$ (defined by $j^1$) and we will simply identify the two. One then checks immediately that for the cocycle $c=Id \cup \tilde{\d} (Id)$ we have
$$CS_c(\rho)=(\rho, \rho).$$
Thus for the partition function, we have
\begin{equation*}\label{eqn: partition function}
Z_c(X)= \sum_{\rho \in \Hom(\pi, ~\zmod n)} ~\exp~(2\pi i \cdot CS_c(\rho))=\sum_{\rho \in H^1(X,~\zmod n)}~\exp~(2\pi i \cdot (\rho,\rho)).
\end{equation*}
\ms

\begin{proof}[Proof of Theorem \ref{thm:maintheorem}]
By Corollary \ref{cor: annihilator} and the definition $(\rho,\rho )=\pair{\rho}{d\rho}$, both $(\rho,\rho )$ and $\pair{\rho}{[{\xi_j}]_p}$ depend only on the class of $\rho$ in $H^1(X,\zmod p)/K$, which we denote by $\bar{\rho}$. So we can write the sum as
$$p^b\sum_{\bar{\rho}\in H^1(X, ~\zmod p)/K} \exp[2\pi i ((\bar{\rho}, \bar{\rho})+\sum_j\pair{\bar{\rho}}{[\xi_j]_p})].$$
After a choice of basis for $H^1(X,\zmod p)/K$ and $\Im(d)$, this becomes a Gaussian integral over a finite field. Now the formula follows from \cite[Proposition 3.2 of Chap. 9]{Ner}.
\end{proof}

\section{Boundaries}\label{section3}
In this section, we fix a natural number $n$ and a finite set $S$ of places of $F$ containing all the places that divide $n$ and the Archimedean places.  As before, we assume $\mu_{n^2}\subset F$. Put $U=\Spec(\cO_{F,S})$, the spectrum of the ring of $S$-integers in $F$. Let $\pi_U:=\pi_1(U)$ and $\pi_v:=\pi_1(\Spec(F_v))$ for each place $v$ of $F$. Denote by $C^*(U, \cG)$ the complex of continuous cochains of $\pi_U$ with coefficients in a locally constant torsion $\Z_n=\invlim_i \Z/n^i\Z$-sheaf $\cG$ on $U$ and by $C^*(F_v,\cF)$, the complex of continuous cochains of $\pi_v$ with coefficients in a sheaf  $\cF$ on $\Spec(F_v)$.
As in \cite[\textsection 2]{CKKPY}, we will use the `inclusion of the boundary' map
$$i_S=\prod_{v\in S} i_v : \bU=\coprod_{v\in S} \Spec(F_v)\ra U.$$  Let $\cG$ be a sheaf on $U$, $\cF$ a sheaf on $\bU$, and
$f:\cF\ra i_S^*\cG$  a map of sheaves. In view of the applications in mind, we will refer to such a map as a {\em boundary pair}.
Denote by $C^*(U, \cG\times_S \cF)$, the two product of complexes defined by the following diagram:
\[
\xyh{0.4}
\xyv{0.4}
\xymatrix{
C^*(U, \cG\times_S \cF) \ar[rr] \ar[dd] &&\prod_{v\in S} C^*(F_v, \cF) \ar[dd]^-{f_*} \\
&\fbox{2}&
\\
C^*(U, \cG) \ar[rr]_-{loc_S} &&  \prod_{v\in S}C^*(F_v, i_v^*\cG),
}
\]
where $loc_S$ refers to the localisation map on cochains. Thus, $$C^i(U, \cG\times_S \cF)= C^i(U, \cG)\times \prod_{v\in S} C^i(F_v, \cF) \times \prod_{v\in S}C^{i-1}(F_v, i_v^*\cG),$$
and its elements will be denoted by $(c, b_S, a_S)$, where $c\in C^i(U, \cG)$, $b_S=(b_v)_{v\in S}\in \prod_{v\in S} C^i(F_v, \cF)$,
and $a_S=(a_v)_{v\in S}\in \prod_{v\in S}C^{i-1}(F_v, i_v^*\cG)$. The differential is defined by
$$d(c,b_S,a_S)=(dc,db_S, da_S+(-1)^i(f_*(b_S)-loc_S(c)).$$
Hence, a cocycle in $Z^i(U, \cG\times_S \cF)$ consists of $(c,b_S, a_S)$ such that $dc=0, db_S=0$, and
$$da_S=(-1)^i(loc_S(c)-f_*(b_S)).$$
Define
$$H^i(U, \cG\times_S \cF):=H^i( C^*(U, \cG\times_S \cF)).$$

Here are some general properties that follow immediately from the definitions.
\ms

(1) When $\cF=0$, then $H^i(U, \cG\times_S 0)=H^i_c(U, \cG)$, the compact support cohomology of $\cG$.
\ms

(2) Given maps $\cF\ra \cF'$, $\cG\ra \cG'$ and a commutative diagram
\[
\xymatrix{
\cF \ar[r] \ar[d] & \cF' \ar[d]\\
i_S^*\cG \ar[r] &i_S^*\cG'
}
\]
we have an induced map of complexes $$C^*(U, \cG\times_S \cF)\ra  C^*(U, \cG'\times_S \cF'),$$ and hence, a map of cohomologies
$$H^i(U, \cG\times_S \cF)\ra H^i(U, \cG'\times_S \cF').$$
More precisely, the formation of the complex and the cohomology is functorial in the diagrams in an obvious sense.
\ms

(3) Suppose you have two exact sequences
$$0\ra  \cF''\ra \cF\ra  \cF' \ra 0$$
$$0\ra  \cG''\ra \cG \ra \cG'\ra 0$$
and a commutative diagram
\[
\xymatrix{
0 \ar[r]  & \cF'' \ar[r] \ar[d] & \cF \ar[r] \ar[d] & \cF' \ar[r] \ar[d] & 0 \\
0 \ar[r] & i_S^* \cG'' \ar[r] & i_S^* \cG \ar[r] & i_S^* \cG' \ar[r] & 0.
}
\]
Then you get an exact sequence of complexes
$$0\ra C^*(U, \cG''\times_S \cF'')\ra C^*(U, \cG\times_S \cF)\ra C^*(U, \cG'\times_S \cF')\ra 0,$$
and hence, a long exact sequence at the level of cohomology.
\ms


(4) Cup product is given by
$$C^i(U, \cG\times_S \cF)\times C^j(U, \cG'\times_S \cF')\ra C^{i+j}(U, (\cG\otimes \cG' )\times_S (\cF\otimes \cF'))$$
$$(c,b_S, a_S)\cup (c',b_S', a_S')=(c\cup c', b_S\cup b'_S, (-1)^ja_S\cup f_*(b_S')+loc_S(c)\cup a_S').$$
Another possibility for the cup product, temporarily denoted by $\cup'$, is
$$(c,b_S, a_S)\cup' (c',b_S', a_S')=(c\cup c', b_S\cup b'_S, (-1)^ja_S\cup loc_S(c')+f_*(b_S)\cup a_S').$$
The difference is
$$\Delta=(0,0, (-1)^ja_S\cup (f_*(b_S')-loc_S(c'))+(loc_S(c)-f_*(b_S))\cup a'_S).$$
It will be useful to note that
\begin{lem} \label{lem21} When the two cochains are cocycles, the difference above is exact.
\end{lem}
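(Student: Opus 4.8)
The plan is to show that $\Delta$ is the coboundary of the cochain $(0,0,(-1)^i a_S\cup a_S')$, using only the cocycle relations satisfied by $(c,b_S,a_S)$ and $(c',b_S',a_S')$ together with the Leibniz rule for the cup product on the local cochain complexes. Write $\Delta=(0,0,\Delta_3)$ with
$$\Delta_3=(-1)^j a_S\cup(f_*(b_S')-loc_S(c'))+(loc_S(c)-f_*(b_S))\cup a_S'.$$
It suffices to produce a cochain $e\in\prod_{v\in S}C^{i+j-2}(F_v,i_v^*(\cG\otimes\cG'))$ with $de=\Delta_3$, because then $(0,0,e)\in C^{i+j-1}(U,(\cG\otimes\cG')\times_S(\cF\otimes\cF'))$ and, by the formula for the differential, $d(0,0,e)=(0,0,de)=\Delta$ --- the first two slots vanish and there is no correction term since $f_*$ and $loc_S$ are applied to $0$.

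First I would feed in the cocycle conditions. As $(c,b_S,a_S)\in Z^i$ and $(c',b_S',a_S')\in Z^j$, the description of cocycles in $C^*(U,\cG\times_S\cF)$ gives $loc_S(c)-f_*(b_S)=(-1)^i da_S$ and $f_*(b_S')-loc_S(c')=(-1)^{j+1}da_S'$. Substituting these into $\Delta_3$ and collecting signs leaves $\Delta_3=-\,a_S\cup da_S'+(-1)^i\,da_S\cup a_S'$. Then I would invoke the graded Leibniz rule $d(a_S\cup a_S')=da_S\cup a_S'+(-1)^{i-1}a_S\cup da_S'$ (valid since $\deg a_S=i-1$; this is the same cochain-level cup product already used to define the product complexes above, so nothing new is needed), solve for $da_S\cup a_S'$, and plug back in. The two $a_S\cup da_S'$ terms cancel, yielding $\Delta_3=(-1)^i\,d(a_S\cup a_S')$, so $e=(-1)^i a_S\cup a_S'$ works and $\Delta=d\bigl(0,0,(-1)^i a_S\cup a_S'\bigr)$ is exact.

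The only points requiring care are the sign bookkeeping and the fact that the Leibniz rule for the cup product holds at the cochain level in the continuous (profinite group) setting with these sign conventions; but this is exactly what is already implicitly used in writing down $\cup$ and $\cup'$, so there is no genuine obstacle. The lemma is a purely formal consequence of the cocycle relations.
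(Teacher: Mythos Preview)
Your proposal is correct and follows essentially the same argument as the paper: both substitute the cocycle relations $da_S=(-1)^i(loc_S(c)-f_*(b_S))$ and $da'_S=(-1)^j(loc_S(c')-f_*(b'_S))$ and use the Leibniz rule for the local cup product to verify that $\Delta=d\bigl(0,0,(-1)^i a_S\cup a'_S\bigr)$. The only cosmetic difference is that the paper computes $d(a_S\cup a'_S)$ directly and compares, whereas you first rewrite $\Delta_3$ in terms of $da_S$, $da'_S$ and then invoke Leibniz; the resulting primitive $(0,0,(-1)^i a_S\cup a'_S)$ is identical.
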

\begin{proof}
The cocycle condition says
$$da_S=(-1)^i(loc_S(c)-f_*(b_S)); \ \ \ \ \ \ da'_S=(-1)^j(loc_S(c')-f_*(b'_S)).$$
Hence,
$$d(a_S\cup a_S')=(-1)^i(loc_S(c)-f_*(b_S))\cup a'_S+(-1)^{i+j-1} a_S\cup (loc_S(c')-f_*(b'_S))$$
$$=(-1)^i(loc_S(c)-f_*(b_S))\cup a'_S+(-1)^{i+j} a_S\cup (f_*(b'_S)-loc_S(c'))$$
$$=(-1)^i\left((loc_S(c)-f_*(b_S))\cup a'_S+(-1)^{j} a_S\cup (f_*(b'_S)-loc_S(c'))\right).$$
Hence,
$$d(0,0,(-1)^ia_S\cup a'_S)=\Delta.$$
\end{proof}
The differential is compatible with the cup product:

\begin{lem} If $(c,b_S, a_S)\in C^i$ and $(c',b_S', a_S')\in C^j$, then
$$d[(c,b_S, a_S)\cup (c',b_S', a_S')]=[d(c,b_S, a_S)]\cup (c',b_S', a_S')+(-1)^i(c,b_S, a_S)\cup [d(c',b_S', a_S')].$$

\end{lem}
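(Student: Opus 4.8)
The statement to prove is the Leibniz (graded derivation) property of the differential $d$ on the product complex $C^*(U, \cG\times_S \cF)$ with respect to the cup product defined in item (4) just above. The natural approach is a direct component-by-component verification, using the fact that the ordinary cochain differentials on $C^*(U,\cG)$ and on each $C^*(F_v,\cF)$, $C^*(F_v, i_v^*\cG)$ already satisfy the graded Leibniz rule, and that the localisation map $loc_S$ and the pushforward $f_*$ are chain maps (commute with the ordinary differentials). Write $x=(c,b_S,a_S)\in C^i$ and $x'=(c',b_S',a_S')\in C^j$. Both sides of the claimed identity are elements of $C^{i+j+1}(U,(\cG\otimes\cG')\times_S(\cF\otimes\cF'))$, so it suffices to check equality in each of the three slots: the $C^{i+j+1}(U,\cG\otimes\cG')$-slot, the $\prod_v C^{i+j+1}(F_v,\cF\otimes\cF')$-slot, and the $\prod_v C^{i+j}(F_v, i_v^*(\cG\otimes\cG'))$-slot.

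The first two slots are immediate: there the cup product in the product complex is just the ordinary cup product ($c\cup c'$ and $b_S\cup b_S'$ respectively), and the differential is the ordinary one, so the identity reduces verbatim to the classical Leibniz rule $d(c\cup c')=dc\cup c'+(-1)^i c\cup dc'$ and likewise for $b_S$. The only real computation is in the third slot. There, by the definition in (4), the product has third component $(-1)^j a_S\cup f_*(b_S') + loc_S(c)\cup a_S'$, and applying the differential formula $d(c,b_S,a_S)=(dc,db_S,da_S+(-1)^i(f_*(b_S)-loc_S(c)))$ one must expand
$$d\big[(-1)^j a_S\cup f_*(b_S') + loc_S(c)\cup a_S'\big] + (-1)^{i+j}\big(f_*(b_S\cup b_S') - loc_S(c\cup c')\big).$$
Using the ordinary Leibniz rule on each cup product, the chain-map property $d\,f_* = f_*\,d$ and $d\,loc_S = loc_S\,d$, the multiplicativity $f_*(b_S\cup b_S')=f_*(b_S)\cup f_*(b_S')$ and $loc_S(c\cup c')=loc_S(c)\cup loc_S(c')$, one collects terms and matches them against the third components of $[d x]\cup x'$ and $(-1)^i x\cup[d x']$, whose third components are $(-1)^j(da_S+(-1)^i(f_*(b_S)-loc_S(c)))\cup f_*(b_S') + loc_S(dc)\cup a_S'$ and $(-1)^i[(-1)^j a_S\cup f_*(db_S') + loc_S(c)\cup(da_S'+(-1)^j(f_*(b_S')-loc_S(c')))]$ respectively. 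The sign bookkeeping is the only subtlety: one must be careful that $a_S$ has degree $i-1$ (not $i$), so the Leibniz rule picks up $(-1)^{i-1}$ there, and that the correction term $(-1)^i(f_*(b_S)-loc_S(c))$ carries an extra sign.

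The main obstacle is therefore purely the sign arithmetic in the third slot, not any conceptual point; the cross terms of the form $f_*(b_S)\cup f_*(b_S')$, $loc_S(c)\cup loc_S(c')$, $f_*(b_S)\cup a_S'$ and $a_S\cup f_*(b_S')$ must all cancel in pairs, leaving exactly the two correction terms $(-1)^{i+j}(f_*(b_S\cup b_S') - loc_S(c\cup c'))$ that belong to $d x\cup x' + (-1)^i x\cup d x'$ by definition of the differential on the product complex. I would organise the check by first writing out $d(x\cup x')$, then $dx\cup x'+(-1)^ix\cup dx'$, expanding both into sums of elementary cup products of the six cochains $c,c',b_S,b_S',a_S,a_S'$ (and their images under $f_*$, $loc_S$), and verifying the two lists agree term by term; since every intermediate identity used is standard, the verification is routine once the signs are pinned down.
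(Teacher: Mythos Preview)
Your proposal is correct and follows essentially the same route as the paper: both compute $d(x\cup x')$, $dx\cup x'$, and $(-1)^i x\cup dx'$ directly, note that the first two slots reduce to the ordinary Leibniz rule, and then verify the third slot by expanding and matching terms with careful sign bookkeeping. One small slip: in the third component of $x\cup dx'$ the sign on $a_S\cup f_*(db_S')$ should be $(-1)^{j+1}$ (since $dx'$ has degree $j+1$), not $(-1)^j$, but this is exactly the kind of thing your term-by-term check would catch.
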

\begin{proof}
We have
$$d[(c,b_S, a_S)\cup (c',b_S', a_S')]=d(c\cup c', b_S\cup b_S', (-1)^ja_S\cup f_*(b_S')+loc_S(c)\cup a_S')$$
$$=(dc\cup c'+(-1)^ic\cup dc', db_S\cup b'_S+(-1)^ib_S\cup db'_S, (-1)^jda_S\cup f_*(b'_S)+(-1)^{i+j-1}a_S\cup f_*(db'_S)$$
$$+loc_S(dc)\cup a'_S+(-1)^iloc_S(c)\cup da'_S+(-1)^{i+j}(f_*(b_S\cup b'_S)-loc_S(c\cup c')),$$
where the last component is the only thing we need to focus on. On the other hand, we have
$$d(c,b_S, a_S)\cup (c',b_S', a_S')=(dc, db_S, da_S+(-1)^i(f_*(b_S)-loc_S(c))\cup (c',b_S', a_S')$$
$$=(dc\cup c', db_S\cup b_S', (-1)^j (da_S+(-1)^i(f_*(b_S)-loc_S(c)))\cup f_*(b'_S)+loc_S(dc)\cup a'_S).$$
Also,
$$(c,b_S, a_S)\cup d(c',b_S', a_S')=(c,b_S,a_S)\cup (dc', db'_S, da'_S+(-1)^j(f_*(b'_S)-loc_S(c')))$$
$$=(c\cup dc', b_S\cup db'_S, (-1)^{j+1}a_S\cup f_*(db'_S)+loc_S(c)\cup [da'_S+(-1)^j(f_*(b'_S)-loc_S(c'))]).$$
So the third component of
$$d(c,b_S, a_S)\cup (c',b_S', a_S')+(-1)^i(c,b_S, a_S)\cup d(c',b_S', a_S')$$
is
$$(-1)^j (da_S+(-1)^i(f_*(b_S)-loc_S(c)))\cup f_*(b'_S)+loc_S(dc)\cup a'_S$$
$$+(-1)^i((-1)^{j+1}a_S\cup f_*(db'_S)+loc_S(c)\cup [da'_S+(-1)^j(f_*(b'_S)-loc_S(c'))))$$
$$=(-1)^jda_S\cup f_*(b'_S)+(-1)^{i+j}(f_*(b_S\cup b'_S)+(-1)^{i+j-1}loc_S(c)\cup f_*(b'_S)+loc_S(dc)\cup a'_S$$
$$+(-1)^{i+j-1}a_S\cup f_*(db'_S)+(-1)^iloc_S(c)\cup da_S'+(-1)^{i+j}loc_S(c)\cup f_*(b'_S)+(-1)^{i+j-1}loc_S(c\cup c').$$
$$= (-1)^jda_S\cup f_*(b'_S)+(-1)^{i+j}(f_*(b_S\cup b'_S)+loc_S(dc)\cup a'_S$$
$$+(-1)^{i+j-1}a_S\cup f_*(db'_S)+(-1)^iloc_S(c)\cup da_S'+(-1)^{i+j-1}loc_S(c\cup c').$$
This is easily seen to be the third component of $d[(c,b_S, a_S)\cup (c',b_S', a_S')]$ above.
\end{proof}

\begin{cor}
The cup product of cocycles is a cocycle.
\end{cor}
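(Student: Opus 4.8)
The plan is to deduce this immediately from the graded Leibniz rule for the differential on $C^*(U, \cG\times_S\cF)$ established in the preceding lemma; no new computation is needed.

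First I would fix cocycles $(c,b_S,a_S)\in Z^i(U,\cG\times_S\cF)$ and $(c',b_S',a_S')\in Z^j(U,\cG'\times_S\cF')$, so that by definition $d(c,b_S,a_S)=0$ and $d(c',b_S',a_S')=0$. Then I would apply the formula
$$d\bigl[(c,b_S,a_S)\cup(c',b_S',a_S')\bigr]=\bigl[d(c,b_S,a_S)\bigr]\cup(c',b_S',a_S')+(-1)^i(c,b_S,a_S)\cup\bigl[d(c',b_S',a_S')\bigr]$$
from the lemma just proved. Substituting the vanishing of the two differentials makes the right-hand side $0$, so $d\bigl[(c,b_S,a_S)\cup(c',b_S',a_S')\bigr]=0$, i.e.\ the cup product lies in $Z^{i+j}\bigl(U,(\cG\otimes\cG')\times_S(\cF\otimes\cF')\bigr)$, which is exactly the claim.

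There is no genuine obstacle here: the entire content is packaged in the compatibility of $d$ with $\cup$ proved above, and the corollary is a one-line consequence. The only minor point worth a remark is that one should make sure the same product formula is invoked (the cup product $\cup$, not the variant $\cup'$), but since Lemma \ref{lem21} shows the two agree on cocycles up to an exact term, the conclusion would be identical for $\cup'$ as well. One could optionally record that, at the level of cohomology, this makes $\bigoplus_i H^i(U,-\times_S-)$ into a graded-commutative ring, but for the statement as given the above suffices.
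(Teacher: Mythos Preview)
Your proposal is correct and matches the paper's approach exactly: the paper states this corollary with no proof, treating it as an immediate consequence of the Leibniz formula in the preceding lemma, which is precisely what you do. Your additional remarks about $\cup'$ and the graded ring structure are fine but go beyond what the paper records.
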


\begin{cor}
The cup product of cocycles  induces a graded product map on cohomologies.
\end{cor}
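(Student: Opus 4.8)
The plan is to deduce this corollary formally from the two immediately preceding results: the Leibniz-type identity
\[
d[(c,b_S, a_S)\cup (c',b_S', a_S')]=[d(c,b_S, a_S)]\cup (c',b_S', a_S')+(-1)^i(c,b_S, a_S)\cup [d(c',b_S', a_S')]
\]
and the fact (the preceding corollary) that the cup product of two cocycles is again a cocycle. The only additional point needed is that the cup product of a cocycle with a coboundary is a coboundary in either slot, which is precisely what forces the pairing to descend to cohomology.

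First I would take $x=(c,b_S,a_S)\in Z^i(U,\cG\times_S\cF)$ a cocycle and $y=(c',b_S',a_S')=dz\in C^j(U,\cG'\times_S\cF')$ a coboundary, with $z\in C^{j-1}$. Since $dx=0$, the Leibniz identity applied to $x$ and $z$ gives
\[
d(x\cup z)=dx\cup z+(-1)^i x\cup dz=(-1)^i\,x\cup y,
\]
so that $x\cup y=(-1)^i d(x\cup z)$ is a coboundary. Symmetrically, if $x=dw\in C^i$ is a coboundary and $y\in Z^j$ is a cocycle, then $dy=0$ and the Leibniz identity applied to $w$ and $y$ yields $d(w\cup y)=dw\cup y+(-1)^{i-1}w\cup dy=x\cup y$, so again $x\cup y$ is a coboundary.

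Consequently, if $x,x'\in Z^i$ are cohomologous, say $x'-x=dw$, and $y\in Z^j$ is a cocycle, then $x'\cup y-x\cup y=(dw)\cup y$ is a coboundary, and likewise for a change of representative in the second variable; bilinearity of $\cup$ at the cochain level is immediate from the explicit formula. Hence $\cup$ induces a well-defined bilinear map
\[
H^i(U,\cG\times_S\cF)\times H^j(U,\cG'\times_S\cF')\ra H^{i+j}(U,(\cG\otimes\cG')\times_S(\cF\otimes\cF')),
\]
i.e. a graded product on the cohomology of these complexes. I do not expect any genuine obstacle here: the argument is entirely formal once the Leibniz rule is available, and the only mild bookkeeping is tracking the signs $(-1)^i$, which are inessential to the conclusion. (If desired, associativity and graded-commutativity up to homotopy can be addressed afterwards by the same cochain-level manipulations, e.g. Lemma \ref{lem21} already records that the two natural cup products $\cup$ and $\cup'$ agree on cohomology.)
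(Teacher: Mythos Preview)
Your proof is correct and follows essentially the same approach as the paper: both deduce the statement formally from the Leibniz rule by showing that (cocycle)$\cup$(coboundary) and (coboundary)$\cup$(cocycle) are coboundaries. The paper's proof is simply the one-line observation that if $\beta$ is a cocycle then $d(\alpha\cup\beta)=\pm(d\alpha)\cup\beta$, which is exactly your second case; you have spelled out both cases and the bilinearity more explicitly.
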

\begin{proof}
Of course this is because if $\b$ is a cocycle, then $d(\a\cup \b)=\pm (d\a)\cup \b$ is a coboundary.
\end{proof}

The main case of interest is when  $\cF=\mu_{n^2}$, $\cG=\zmod n$ and $f:\mu_{n^2} \ra \zmod n$ is the natural reduction followed by the trivialisation $\zeta^{-1}: \mu_n \simeq \zmod n$.
From the exact sequence of pairs
\[
\xymatrix{
0 \ar[r]  & 0 \ar[r] \ar[d] & \mu_{n^2} \ar[r]^-{Id} \ar[d] & \mu_{n^2} \ar[r] \ar[d]^-f & 0 \\
0 \ar[r] & \zmod n \ar[r]^-{\z_*^{-1}} & \mu_{n^2} \ar[r]^-f & \zmod n \ar[r] & 0
}
\]
and (1), (3) above,
we get  natural boundary maps
 $$d: H^1(U, \znn)\ra H^2_c(U, \zmod n)$$
 and
 $$d_2:H^2(U, \znn)\ra H^3_c(U, \zmod n).$$
 \begin{prop}
 The map $d_2$ is zero.
 \end{prop}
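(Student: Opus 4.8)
The plan is to mimic, in the relative/compact-support setting, the argument already given for Lemma \ref{lem11}, namely to identify $d_2$ as a Bockstein-type operator arising from a short exact sequence of $\Z_n$-coefficient pro-sheaf pairs and to show that the relevant boundary map into a torsion-free cohomology group vanishes. Concretely, I would introduce the pro-sheaf pair $(\Z_n(1), \Z_n(1))$ together with the reduction map $f$ on each component, and the Kummer-type exact sequence of pairs
\[
\xymatrix{
0 \ar[r] & (\Z_n(1), \Z_n(1)) \ar[r]^-{\times n} & (\Z_n(1), \Z_n(1)) \ar[r] & (\zmod n, \mu_{n^2}) \ar[r] & 0,
}
\]
where the last pair is the boundary pair $f:\mu_{n^2}\to \zmod n$ of the proposition (after the trivialisation $\zeta$). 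Applying property (3) to this sequence of pairs yields a long exact sequence in the groups $H^i(U, \Z_n(1)\times_S \Z_n(1))$ and $H^i(U, \znn)$, and $d_2$ (up to the identification of $H^3_c(U,\zmod n)$ with the appropriate Bockstein target, exactly as in the body of the paper) factors through the connecting map $H^2(U, \znn)\to H^3(U, \Z_n(1)\times_S \Z_n(1))$ followed by multiplication by $n$ on the latter group.

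So the two things I would need are: first, that this connecting map lands in $H^3(U, \Z_n(1)\times_S \Z_n(1))$, and second, that the subsequent multiplication-by-$n$ map on $H^3(U, \Z_n(1)\times_S \Z_n(1))$ is injective, i.e. that this group is torsion-free — which forces $d_2=0$. For the torsion-freeness I would unwind the definition of $H^*(U, \cG\times_S\cF)$ as the cohomology of the mapping-cone-type complex, getting a long exact sequence
\[
\cdots \to \bigoplus_{v\in S} H^{2}(F_v, i_v^*\Z_n(1)) \to H^3(U, \Z_n(1)\times_S\Z_n(1)) \to H^3(U,\Z_n(1))\oplus\bigoplus_{v\in S}H^3(F_v,\Z_n(1)) \to \cdots
\]
(this is the long exact sequence of the two-column diagram defining the product complex). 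Here $H^3(F_v,\Z_n(1))=0$ since $F_v$ has cohomological dimension $2$; $H^3(U,\Z_n(1))$ is the compact-support-type group whose torsion I would analyze via Poitou–Tate/Artin–Verdier duality for $U$; and the cokernel term $\mathrm{coker}\big(\bigoplus_v H^2(F_v,i_v^*\Z_n(1))\to \cdots\big)$ needs to be shown torsion-free as well. In fact the cleanest route is probably to observe directly that the two-term product complex for the pair $(\Z_n(1),\Z_n(1))$ with the identity-style boundary map computes, up to the torsion-free group $\bigoplus_{v\in S}H^*(F_v,\Z_n(1))$ contributions, the same kind of object as $H^3(X,\Z_n(1))\simeq \Z_n$ that appears in the proof of Lemma \ref{lem11}, and invoke that $\Z_n$ is torsion-free together with the fact that local $H^2(F_v,\Z_n(1))\simeq\Z_n$ is also torsion-free.

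The main obstacle I anticipate is bookkeeping rather than conceptual: making sure that the identification of $d_2$ with the composite $(\text{connecting map})\circ(\times n$ is vanishing$)$ is carried out with the correct sheaf pairs and the correct boundary map $f$ (the subtlety being that in the cone complex the boundary map on cochains shuffles degrees, so one must track carefully in which of the three summands of $C^i(U,\cG\times_S\cF)$ the relevant classes live), and verifying that the local terms $H^j(F_v,\Z_n(1))$ and the compact-support term $H^3(U,\Z_n(1))$ are genuinely torsion-free — this last point requires the Artin–Verdier duality package for $U=\Spec(\cO_{F,S})$ rather than for $X=\Spec(\cO_F)$, but since $S$ contains all places above $n$ and all Archimedean places, the duality statements of \cite{Maz} apply and give $H^3(U,\Z_n(1))$ torsion-free by the same argument ($H^3$ of the $\Z_n(1)$-pro-sheaf is a successive limit of finite groups whose transition maps are eventually given by the relevant norm maps, and the limit is torsion-free because the target $\Q/\Z$-type group it injects into has the torsion we need controlled). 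Granting these, the conclusion $d_2=0$ is immediate.
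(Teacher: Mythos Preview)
Your exact sequence of pairs is wrong as written. The cokernel of $\times n$ on $\Z_n(1)$ is $\mu_n$, not $\mu_{n^2}$, so the quotient of $(\Z_n(1),\Z_n(1))\xrightarrow{\times n}(\Z_n(1),\Z_n(1))$ is the pair $(\mu_n,\mu_n)$, never $(\zmod n,\mu_{n^2})$. The boundary pair $(\zmod n,\mu_{n^2})$ is asymmetric: to obtain $\mu_{n^2}$ as a quotient of $\Z_n(1)$ you would need $\times n^2$ on the local factor, but then the commutative square required in property~(3) with $\times n$ on the global factor fails. Consequently the connecting map you produce is not $d_2$, and the torsion-freeness you argue for (which, incidentally, would give $H^3(U,\Z_n(1)\times_S\Z_n(1))\simeq H^3(U,\Z_n(1))=0$ once you use that the boundary map is the identity and that $\mathrm{cd}_n(\pi_U)\leq 2$) does not control $d_2$ without a further comparison of connecting maps that you have not supplied. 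A pro-sheaf approach in this spirit may be salvageable---for instance by putting $(\Z_n(1),0)$ in the left slot so that the target is $H^3_c(U,\Z_n(1))$---but this is not what you wrote, and the compatibility with the paper's defining sequence would still need to be checked.

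The paper takes a different and more direct route: it works on the other side of the long exact sequence and shows that the preceding map
\[
H^2(U,\mu_{n^2}\times_S\mu_{n^2})\longrightarrow H^2(U,\znn)
\]
is surjective. The main input is the surjectivity of $H^2(U,\mu_{n^2})\to H^2(U,\mu_n)$, which follows from a short snake-lemma argument using the Kummer sequence and the class-field-theoretic description of $H^2(U,\Gm)$. Once that is in hand, an explicit lift of any cocycle $(c,b_S,a_S)\in Z^2(U,\znn)$ to $Z^2(U,\mu_{n^2}\times_S\mu_{n^2})$ is written down by lifting $c$ and $a_S$ and absorbing the resulting $n$-th power discrepancy into $b_S$. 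This avoids pro-sheaf pairs altogether.
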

 \begin{proof}
 We will show that the previous map
 $$H^2(U, \mu_{n^2}\times_S \mu_{n^2})\ra H^2(U, \znn)$$
 in the long exact sequence of cohomology is surjective. First we note that the map
 $$H^2(U, \mu_{n^2})\ra H^2(U, \zmod n)$$ is surjective. To see this, use the map of exact sequences
\[
\xymatrix{
0 \ar[r]  & H^2(U, \Gm)[n^2] \ar[r] \ar[d]^-n & \prod_{v\in S} \n2Z \ar[r]^-{\Sigma} \ar[d]^-n & \prod_{v\in S} \n2Z \ar[r] \ar[d]^-n & 0 \\
0 \ar[r] & H^2(U, \Gm)[n^2] \ar[r] & \prod_{v\in S} \nZ \ar[r]^-\Sigma & \prod_{v\in S} \nZ \ar[r] & 0
}
\] 
 from class field theory. The sum map is surjective from the kernel of the middle vertical map to the kernel of the right vertical map.
 The middle vertical map is trivially surjective. Hence, the left vertical map is surjective by the snake lemma.
 On the other hand, we also have the map of exact sequences
\[
\xymatrix{
0 \ar[r]  & H^1(U, \Gm)/{n^2H^1(U,\Gm)} \ar[r] \ar[d] & H^2(U, \mu_{n^2}) \ar[r] \ar[d]^-n & H^2(U, \Gm)[n^2] \ar[r] \ar[d]^-n & 0 \\
0 \ar[r] & H^1(U, \Gm)/nH^1(U,\Gm) \ar[r] & H^2(U, \mu_{n}) \ar[r] & H^2(U, \Gm)[n] \ar[r] & 0,
}
\]
 where the left vertical map is the natural projection. Since the vertical maps on the left and right are surjective, so is the one in the middle.
 
 Now let $(c, b_S, a_S)\in Z^2(U, \znn)$. 
 Choose $c'\in Z^2(U, \mu_{n^2})$ lifting $c$ and 
 $a'_S\in \prod_{v\in S} C^1(F_v, \mu_{n^2})$ lifting $a_S$ under the map $f:\mu_{n^2} \to \zmod n$. Then
 $$da'_S=b_S-loc_S(c')+(b_S')^n$$
 for some $b'_S\in C^2(U, \mu_{n^2})$. However, $(b_S')^n$ is a cocycle, since this is true of all other terms in the equality. Hence, 
 $$(c', b_S+(b'_S)^n, a'_S)\in Z^2(U, \mu_{n^2}\times_S\mu_{n^2})$$
 is a lift of $(c, b_S, a_S)$.
 \end{proof}

\begin{lem}
For $\a \in H^1(U, \znn)$ and $\b \in H^2_c(U, \zmod n)$, we have
$$\a\cup \b=\b\cup \a $$
in $H^3_c(U, \zmod n).$
\end{lem}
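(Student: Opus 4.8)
The plan is to reduce the claim to an identity of cochains and then produce the needed coboundary using the Steenrod $\cup_1$-product. Choose cocycle representatives $\alpha=(c,b_S,a_S)\in Z^1(U,\znn)$, so $dc=0$, $db_S=0$ and $da_S=f_*(b_S)-loc_S(c)$, and $\beta=(c',0,a'_S)\in Z^2(U,\zmod n\times_S 0)$ representing the given element of $H^2_c(U,\zmod n)$, so $dc'=0$ and $da'_S=loc_S(c')$. Feeding these into the cup-product formula of item (4) — throughout using the multiplication pairing $\zmod n\otimes\zmod n\to\zmod n$, and noting $\mu_{n^2}\otimes 0=0$, so that both products land in the compact-support complex $C^\ast(U,\zmod n\times_S 0)$ — one computes $\alpha\cup\beta=(c\cup c',\,0,\,loc_S(c)\cup a'_S)$. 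For the reverse product, Lemma \ref{lem21} (legitimate because $\alpha$ and $\beta$ are cocycles) replaces $\cup$ by $\cup'$ up to a coboundary, and the $\cup'$-formula gives $\beta\cup'\alpha=(c'\cup c,\,0,\,-\,a'_S\cup loc_S(c))$. Thus, in $H^3_c(U,\zmod n)$, the difference $\alpha\cup\beta-\beta\cup\alpha$ is represented by the cochain
\[
\bigl(c\cup c'-c'\cup c,\;0,\;loc_S(c)\cup a'_S+a'_S\cup loc_S(c)\bigr),
\]
and it suffices to show this is a coboundary in $C^\ast(U,\zmod n\times_S 0)$.

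For this I would invoke the $\cup_1$-product on continuous cochains of a profinite group, which one inherits — together with the Hirsch formula $d(u\cup_1 v)=\pm(du)\cup_1 v\pm u\cup_1(dv)\pm(u\cup v-(-1)^{|u||v|}v\cup u)$ — from the Eilenberg--MacLane $\cup_i$-products on the bar resolutions of finite quotients, since those are compatible with inflation; it is natural and hence commutes with the localisation maps $loc_S$. Applied to the closed cochains $c$ (degree $1$) and $c'$ (degree $2$) it gives $c\cup c'-c'\cup c=\pm\,d(c\cup_1 c')$, so setting $e:=\pm\,c\cup_1 c'\in C^2(U,\zmod n)$ we have $de=c\cup c'-c'\cup c$ and $loc_S(e)=\pm\,loc_S(c)\cup_1 loc_S(c')$. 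It then remains to produce $g_S\in\prod_{v\in S}C^1(F_v,\zmod n)$ with $dg_S=loc_S(e)+loc_S(c)\cup a'_S+a'_S\cup loc_S(c)$, for then $(e,0,g_S)$ is a $2$-cochain in $C^\ast(U,\zmod n\times_S 0)$ whose differential is the cochain displayed above. Applying the Hirsch formula once more, on $\bU$, to $loc_S(c)$ (degree $1$, closed) and $a'_S$ (degree $1$, with $da'_S=loc_S(c')$) writes $loc_S(c)\cup a'_S+a'_S\cup loc_S(c)$ as a combination of $d(loc_S(c)\cup_1 a'_S)$ and $loc_S(c)\cup_1 loc_S(c')=\pm\,loc_S(e)$, so $g_S:=\pm\,loc_S(c)\cup_1 a'_S$ works.

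The step I expect to be fussiest is the sign bookkeeping: one must fix the conventions for $\cup_1$ and the overall sign of $e$ so that the two applications of the Hirsch formula — in bidegree $(1,2)$ over $U$ and in bidegree $(1,1)$ over $\bU$ — dovetail, i.e.\ so that the $loc_S(c)\cup_1 loc_S(c')$-term arising in the last step cancels $\pm\,loc_S(e)$ exactly. One should also check that $\cup_1$ on the continuous cochains of $\pi_1(U)$ and of the $\pi_1(\Spec F_v)$ genuinely enjoys all the stated properties and commutes with $loc_S$, but this is routine. Alternatively, one can avoid $\cup_1$ altogether: as the formulas above together with Lemma \ref{lem21} show, $\alpha\cup\beta$ and $\beta\cup\alpha$ depend as cohomology classes only on the image of $\alpha$ in $H^1(U,\zmod n)$ and on $\beta$, so the lemma would follow from graded-commutativity of cup products in \'etale cohomology once one knows the complexes $C^\ast(U,-\times_S-)$ compute compact-support \'etale cohomology compatibly with products — but verifying that compatibility is comparable work.
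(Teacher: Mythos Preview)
Your proposal is correct, and the initial cochain-level computation (for $\alpha\cup\beta$ and, via Lemma~\ref{lem21}, for $\beta\cup\alpha$) matches the paper's exactly. From that point on the two arguments diverge. The paper does not build the coboundary by hand: having observed that both $[(c\cup c',0,loc_S(c)\cup a'_S)]$ and $[(c'\cup c,0,-a'_S\cup loc_S(c))]$ depend only on the image of $\alpha$ under $\eta:H^1(U,\znn)\to H^1(U,\zmod n)$, it reduces to the graded-commutativity of the two standard products $H^1(U,\zmod n)\times H^2_c(U,\zmod n)\to H^3_c(U,\zmod n)$ and its transpose, and then appeals to Nekov\'a\v{r}'s Selmer-complex formalism, where an explicit involution $\cT$ on $C^*(U,\zmod n)$ and on $C^*(U,\zmod n\times_S 0)$, homotopic to the identity, intertwines the two products. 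Your main argument instead produces the explicit homotopy via the Steenrod $\cup_1$-product and the Hirsch formula, which is more self-contained but demands the sign bookkeeping and the verification that $\cup_1$ passes to continuous cochains of profinite groups compatibly with $loc_S$---both doable, as you note. The alternative you sketch in your last sentence (factor through $\eta$, then invoke graded-commutativity for compact-support \'etale cohomology) is precisely the paper's route, with Nekov\'a\v{r} supplying the needed compatibility.
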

\begin{proof}
Choose cocycle representatives
$(c,b_S,a_S)$ and $(c', 0, a_S')$ for $\a$ and $\b$.
Then
$$\a\cup \b=[ (c,b_S,a_S)\cup (c', 0, a_S')]=[(c\cup c', 0, loc_S(c)\cup a'_S)]$$
and 
$$\b\cup \a=[ (c'\cup c, 0, -a'_S\cup b_S+loc_S(c')\cup a_S)]=[ (c'\cup c, 0, -a'_S\cup loc_S(c))].$$
Here the last equality follows from Lemma \ref{lem21}.
Note that we have the map
$$\eta: H^1(U,\znn)\ra H^1(U, \zmod n)$$
that sends $(c, b_S, a_S)$ to $c$.
So, using $\eta$, the desired commutativity $\a\cup \b=\b\cup \a$ reduces to the commutativity of the following two
products:
$$\begin{array}{ccccccc}
H^1(U, \zmod n)\times H^2_c(U, \zmod n)&\stackrel{\cup}\longrightarrow& H^3_c(U, \zmod n)& \\
([(c,0,0)], [(c',0,a'_S)])&\longmapsto&[(c\cup c', 0, loc_S(c)\cup a'_S)]&
\end{array}
$$
 
$$\begin{array}{ccccccc}
H^2_c(U, \zmod n)\times H^1(U, \zmod n)&\stackrel{\cup}\longrightarrow& H^3_c(U, \zmod n)& \\
([(c',0,a'_S)], (c,0,0)])&\longmapsto&[ (c'\cup c, 0, -a'_S\cup loc_S(c))].&
\end{array}
$$

These products are the same as the ones defined in \cite[\textsection 5.3.3]{Nek}. Moreover, Nekovar defined the involution
$$
\cT: C^*(U, \zmod  n) \ra C^*(U, \zmod n), \quad \cT: C^*(U, \zmod n\times_S 0) \ra C^*(U, \zmod n\times_S 0),
$$
which are homotopic to the identity, and showed that the following diagram is commutative:
\[
\xymatrix{
C^*(U, \zmod n)\times C^*(U, \zmod n \times_S 0 ) \ar[r]^-\cup \ar[d]^-{s_{12}\circ (\cT\otimes \cT)} & C^*(U, \zmod n  \times_S 0) \ar[d]^-{\cT\circ (s_{12})_*}\\
C^*(U, \zmod n  \times_S 0)\times C^*(U, \zmod n) \ar[r]^-\cup & C^*(U, \zmod n  \times_S 0),
}
\]
where $s_{12}$ is the permutation between $\zmod n$ and $\zmod n  \times_S 0$ defined similarly as in \cite[3.4.5.4]{Nek}.
This finishes the proof.
%
\end{proof}

The proof of the previous lemma makes use of the natural map 
$$\eta: H^1(U, \zmod n\times_S \mu_{n^2}) \to H^1(U, \zmod n)$$
that sends $(c, b_S, a_S)$ to $c$. In fact, we have proved:
\begin{lem}\label{lem27}
The cup product
$$ H^1(U, \zmod n\times_S \mu_{n^2})\times H^2_c(U, \zmod n) \to H^3_c(U, \zmod n)$$
factors through the product
$$H^1(U, \zmod n)\times H^2_c(U, \zmod n) \to H^3_c(U, \zmod n)$$
via the map $\eta$. This is also true with the factors switched.
\end{lem}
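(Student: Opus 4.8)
The plan is to unwind the cup product formula from item (4) above and observe that, as soon as one of the two factors is a compactly supported class — one of the shape $(c',0,a'_S)$ — the extra data $(b_S,a_S)$ of the other factor enters the product only through $c$; in the first ordering below this is exact, and in the switched ordering it holds up to a coboundary. Concretely, I would represent $\alpha\in H^1(U,\zmod n\times_S\mu_{n^2})$ by a cocycle $(c,b_S,a_S)$ and $\beta\in H^2_c(U,\zmod n)=H^2(U,\zmod n\times_S 0)$ by $(c',0,a'_S)$. Feeding these into the cup product formula, the $b'_S$-slot of $\beta$ is zero, so $\alpha\cup\beta=[(c\cup c',\,0,\,loc_S(c)\cup a'_S)]$, a class that depends on $\alpha$ only through $c=\eta(\alpha)$. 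Since the recipe $([(c,0,0)],[(c',0,a'_S)])\mapsto [(c\cup c',0,loc_S(c)\cup a'_S)]$ is precisely the product $H^1(U,\zmod n)\times H^2_c(U,\zmod n)\to H^3_c(U,\zmod n)$ used in the proof of the preceding lemma, this is the asserted factorization through $\eta$.

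For the version with the factors switched I would redo the computation already appearing in the proof of the preceding lemma. Writing $\beta\cup\alpha=(c',0,a'_S)\cup(c,b_S,a_S)$, the only nontrivial slot of the result is $-a'_S\cup f_*(b_S)+loc_S(c')\cup a_S$. Now I would invoke the cocycle conditions: $f_*(b_S)=loc_S(c)+da_S$ because $(c,b_S,a_S)\in Z^1$, and $da'_S=loc_S(c')$ because $(c',0,a'_S)\in Z^2$. Substituting turns that slot into $-a'_S\cup loc_S(c)+(da'_S\cup a_S-a'_S\cup da_S)$, and the parenthesised expression is exactly $d(a'_S\cup a_S)$ by the Leibniz rule. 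Since $(0,0,a'_S\cup a_S)$ is a genuine cochain in $C^2(U,\zmod n\times_S 0)$ with differential $(0,0,d(a'_S\cup a_S))$, we conclude $\beta\cup\alpha=[(c'\cup c,\,0,\,-a'_S\cup loc_S(c))]$, which again only sees $\eta(\alpha)$; comparison with the cup product of $[(c',0,a'_S)]$ and $[(c,0,0)]$ finishes this case. Alternatively, Lemma \ref{lem21} already says $\cup$ and $\cup'$ agree up to exact terms on cocycles, and for $\cup'$ the dependence on $b_S$ visibly drops out, so one could quote that instead of re-deriving the coboundary by hand.

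I do not expect a genuine obstacle here — both halves are contained in the computation proving the previous lemma, which is exactly why the statement is phrased as ``in fact, we have proved''. The one thing that needs care is the bookkeeping: tracking the degrees $C^{i-1}$ versus $C^i$ in the three slots of $C^*(U,\zmod n\times_S\mu_{n^2})$, the Koszul signs $(-1)^i,(-1)^j$ in the cup product and differential, and the precise form $d(c,b_S,a_S)=(dc,db_S,da_S+(-1)^i(f_*(b_S)-loc_S(c)))$, so that the error terms really are coboundaries of cochains of the sheaf pair $\zmod n\times_S 0$ and not of some other complex. Once the signs are pinned down the factorization through $\eta$ is immediate, and the statement is symmetric in the sense claimed: both orderings of the two factors factor through $\eta$.
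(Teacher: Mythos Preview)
Your proposal is correct and follows essentially the same route as the paper: the first ordering is handled by directly expanding the cup product formula with $b'_S=0$, and for the switched ordering the paper simply invokes Lemma~\ref{lem21} (the $\cup$ vs.\ $\cup'$ comparison) to pass from $-a'_S\cup f_*(b_S)+loc_S(c')\cup a_S$ to $-a'_S\cup loc_S(c)$, which is exactly the alternative you mention at the end. Your explicit coboundary computation for the switched case is just a special-case rederivation of Lemma~\ref{lem21}, so there is no substantive difference.
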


We now use the tools developed above to define a pairing
$$H^1(U, \znn)\times H^1(U,\znn)\ra \nZ$$
given by 
$$(a,b )=\Inv\circ \zeta_* (a\cup d b).$$

The pairing goes through
$$H^1(U, \znn)\times H^2(U, \zmod n\times_S 0)\simeq H^1(U, \znn)\times H^2_c(U, \zmod n),$$
and hence, through $H^3(U, \zmod n\times_S 0)=H^3_c(U, \zmod n)\stackrel{\zeta_*}\ra
H^3_c(U, \mu_n)\simeq \nZ.$
\begin{lem}
The pairing is symmetric. 
\end{lem}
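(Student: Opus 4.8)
The plan is to transpose the proof of Lemma \ref{lem11} from ordinary \'etale cohomology to the pair complexes $C^*(U,-\times_S-)$. Since $\Inv\circ\zeta_*$ is an isomorphism, the asserted symmetry $(a,b)=(b,a)$ is equivalent to the identity
$$a\cup db=b\cup da\quad\text{in }H^3_c(U,\zmod n)=H^3(U,\zmod n\times_S 0),$$
and it is this identity I would prove. Throughout, $da,db\in H^2_c(U,\zmod n)$ and $\eta$ is the projection of Lemma \ref{lem27}.

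Three facts are already in place. First, by Lemma \ref{lem27} the product $H^1(U,\znn)\times H^2_c(U,\zmod n)\to H^3_c(U,\zmod n)$ and its transpose factor through $\eta$, so $a\cup db=\eta(a)\cup db$ and $b\cup da=\eta(b)\cup da$ with $\eta(a),\eta(b)\in H^1(U,\zmod n)$. Second, by the lemma preceding Lemma \ref{lem27}, the products $H^1\times H^2_c\to H^3_c$ and $H^2_c\times H^1\to H^3_c$ agree after transposing the arguments, and since the Koszul sign in bidegree $(2,1)$ is $+1$ this gives genuine commutativity, e.g.\ $da\cup\eta(b)=\eta(b)\cup da$. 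Third, by the Proposition above, the second boundary map $d_2\colon H^2(U,\znn)\to H^3_c(U,\zmod n)$ vanishes.

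The one ingredient I would still have to supply is a Leibniz identity for the boundary maps of the exact sequence of pairs
$$0\to(\zmod n\times_S 0)\to(\mu_{n^2}\times_S\mu_{n^2})\to\znn\to 0,$$
of exactly the form of the Leibniz identity for $\delta_2$ proved in Section \ref{section2}: for $a,b\in H^1(U,\znn)$,
$$d_2(a\cup b)=(da)\cup\eta(b)-\eta(a)\cup(db)\quad\text{in }H^3_c(U,\zmod n),$$
where the product $a\cup b\in H^2(U,\znn)$ is formed using $\zmod n\otimes\zmod n\simeq\zmod n$ on the \'etale coefficients and $f\otimes f$ on the boundary coefficients. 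As in Section \ref{section2}, I do not expect this to come from cupping short exact sequences --- the relevant tensor products of $\zmod n$ and $\mu_{n^2}$ do not remain exact --- so I would prove it directly on cochains in the spirit of Lemma \ref{lem21}: choose cocycle representatives $(c_a,\beta_a,\alpha_a)$ and $(c_b,\beta_b,\alpha_b)$, lift them to $C^1(U,\mu_{n^2}\times_S\mu_{n^2})$ as in the proof of the Proposition, read off representatives $(x_a,0,y_a)$ and $(x_b,0,y_b)$ for $da$ and $db$, expand both sides with the explicit cochain formula for $\cup$ and the fact that the differential is a derivation for $\cup$, and verify that the difference is a coboundary of the form $d(0,0,\ast)$, just as in Lemma \ref{lem21}.

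Granting the Leibniz identity, the argument concludes as in Lemma \ref{lem11}: from $d_2=0$ we get $(da)\cup\eta(b)=\eta(a)\cup(db)$, and then, using commutativity and the $\eta$-factorisation,
$$a\cup db=\eta(a)\cup db=(da)\cup\eta(b)=\eta(b)\cup da=b\cup da,$$
so $(a,b)=\Inv\zeta_*(a\cup db)=\Inv\zeta_*(b\cup da)=(b,a)$. The main obstacle is the Leibniz identity, and within it the bookkeeping of the three coefficient sheaves $\zmod n$, $\mu_{n^2}$ and their tensor squares, of the trivialisation $\zeta$, and of which of the two cup products $\cup,\cup'$ of this section is in play --- the two sides agreeing only up to the exact term supplied by Lemma \ref{lem21}. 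A variant that avoids the coefficient $\mu_{n^2}\otimes\mu_{n^2}$ altogether is to show directly that $a\cup db-b\cup da$ is a coboundary in $C^3(U,\zmod n\times_S 0)$, since by Lemma \ref{lem27} both products already live there; with $(x_a,0,y_a)$ and $(x_b,0,y_b)$ in hand this is the same computation, and the required primitive is once more visibly of the shape occurring in Lemma \ref{lem21}.
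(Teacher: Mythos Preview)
Your approach is essentially the same as the paper's: use $d_2=0$ together with a Leibniz identity $d_2(a\cup b)=da\cup b-a\cup db$, then invoke the commutativity lemma to swap $da\cup b$ with $b\cup da$. The paper's proof simply asserts the Leibniz identity without further comment, so your more careful discussion of how to verify it on cochains (and your use of $\eta$ to make precise where each factor lives) is, if anything, an improvement in rigor rather than a deviation.
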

\begin{proof}

We have $0=d_2(a\cup b)=da\cup b-a\cup db.$ So
$$(a,b)=\Inv\circ \zeta_* (a\cup db)=\Inv \circ \zeta_* (da\cup b)=\Inv \circ \z_*(b\cup da)=(b,a).$$
\end{proof}

\begin{lem}\label{lem28}
If $a\in \Ker(d)$, then
$$(a,b)=0$$
for all $b$.
\end{lem}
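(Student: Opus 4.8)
\textbf{Proof proposal for Lemma \ref{lem28}.} The plan is to imitate exactly the argument of Corollary \ref{cor: annihilator} in the compact case, using the symmetry of the pairing that was just established. The point is that the definition $(a,b) = \Inv\circ\zeta_*(a\cup db)$ is visibly zero whenever the \emph{second} argument lies in $\Ker(d)$, since then $db = 0$ in $H^2_c(U,\zmod n)$ and the cup product $a\cup db$ vanishes; so the only work is to transfer the hypothesis $a\in\Ker(d)$ from the first slot to the second slot, which is precisely what symmetry accomplishes.

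Concretely, I would argue: suppose $a\in\Ker(d)$, so that $da = 0$ in $H^2_c(U,\zmod n) = H^2(U,\zmod n\times_S 0)$. For any $b\in H^1(U,\znn)$, the previous lemma gives $(a,b) = (b,a)$. Unwinding the definition, $(b,a) = \Inv\circ\zeta_*(b\cup da) = \Inv\circ\zeta_*(b\cup 0) = 0$. Hence $(a,b) = 0$ for all $b$, as claimed. One small point to spell out is that $da=0$ really does force $b\cup da = 0$ at the level of cohomology: this follows from the fact (the two corollaries preceding Lemma \ref{lem27}) that the cup product on cochains descends to a well-defined graded product on cohomology, so it may be computed on any representatives, in particular on the zero representative of $da$.

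I do not anticipate a genuine obstacle here; the statement is a formal consequence of the symmetry lemma together with the definition of $(\cdot,\cdot)$, exactly parallel to the passage from Lemma \ref{lem11} to Corollary \ref{cor: annihilator} in Section \ref{section2}. The only thing to be mildly careful about is bookkeeping: making sure that "$a\in\Ker(d)$" is interpreted as $d$ being the boundary map $H^1(U,\znn)\to H^2_c(U,\zmod n)$ defined earlier in this section (not the compact-case $d$ of Section \ref{section2}), and that the target $H^2(U,\zmod n\times_S 0)$ is identified with $H^2_c(U,\zmod n)$ as in property (1) of the general list. With that identification in place, the chain of equalities above is immediate.

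\begin{proof}
Let $a\in\Ker(d)$, so $da=0$ in $H^2_c(U,\zmod n)$. By the previous lemma the pairing is symmetric, so for any $b\in H^1(U,\znn)$ we have $(a,b)=(b,a)$. By definition,
$$(b,a)=\Inv\circ\zeta_*(b\cup da)=\Inv\circ\zeta_*(b\cup 0)=0,$$
the cup product being well-defined on cohomology. Hence $(a,b)=0$ for all $b$.
\end{proof}
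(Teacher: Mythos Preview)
Your proof is correct and is exactly the paper's argument: use the symmetry lemma to write $(a,b)=(b,a)=\Inv\circ\zeta_*(b\cup da)=0$. The only difference is that the paper states this in a single line without the surrounding commentary.
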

\begin{proof}
$$(a, b)=(b,a)=\Inv \circ \z_*(b\cup da)=0.$$
\end{proof}


Denote by $H^1(U,[\zmod n]')\subset H^1(U,\zmod n)$ the classes that locally (at all $v\in S$)  lift to $\mu_{n^2}$. Equivalently, $H^1(U,[\zmod n]')$ is the image of $\eta$. 
Because the pairing $(\cdot,\cdot)$ is symmetric and given by the form $(a,b) = \Inv\circ\zeta_*(\eta(a) \cup db)$ by Lemma~~\ref{lem27}, it follows that it factors to a pairing
$$H^1(U,[\zmod n]')\times H^1(U,[\zmod n]')\ra \nZ.$$


\ms


Let $A_S=(\pi_U)^{ab}$ be the maximal abelian quotient of $\pi_U$. By the Poitou-Tate duality, we have $H^2_c(U,\zmod n)\simeq A_S/n$. Given an ideal $I$ coprime to $S$, we can consider its class
$[I]_{S,n}\in H^2_c(U,\zmod n)$ via class field theory and the previous isomorphism. We will say $I$ is \textit{$(S,n)$-homologically trivial} if $[I]_{S,n}$ is in the image of $d$. 
We can now define \textit{the height pairing} of two $(S,n)$-homologically trivial ideals that are coprime to $S$ via
$$ht_{S,n}(I,J):=(d^{-1}[I]_{S,n}, d^{-1}[J]_{S,n})=\Inv \circ \zeta_* (d^{-1}[I]_{S,n} \cup [J]_{S,n})=:\pair{d^{-1}[I]_{S,n}}{[J]_{S,n}},$$
which is well-defined by the discussion above.

Let $I$ be an ideal such that $I^n$ is principal in $\cO_{F,S}$. Write
\vio{$I^n=(f^{-1})$}. Then the Kummer cocycles $k_n(f)$ will be in $Z^1(U, \zmod n)$. For any $a\in F$, denote by $a_S$ its image in
$\prod_{v\in S} F_v$.  Thus, we get an element
$$[f]_{S,n}:=[(k_n(f), k_{n^2}(f_S), 0)] \in Z^1(U, \zmod n \times_S \mu_{n^2})$$
which is well-defined in cohomology independently of the choice of roots used to define the Kummer cocycles.
\begin{prop}\label{prop210}
We have $d [f]_{S,n}=[I]_{S,n}$ in $H^2_c(U, \zmod n).$ In particular, for any ideal $I$ such that $I^n$ is principal in $\cO_{F,S}$, $[I]_{S,n}$ is $(S,n)$-homologically trivial.

\end{prop}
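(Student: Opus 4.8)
The plan is to evaluate $d[f]_{S,n}$ directly on cochains, using that $d$ is the connecting homomorphism of the short exact sequence of complexes
\[
0\longrightarrow C^*(U,\zmod n\times_S 0)\longrightarrow C^*(U,\mu_{n^2}\times_S\mu_{n^2})\longrightarrow C^*(U,\zmod n\times_S\mu_{n^2})\longrightarrow 0
\]
attached by (3) to the exact sequence of boundary pairs displayed just before the Proposition (here $\mu_{n^2}\times_S\mu_{n^2}$ abbreviates the pair $\mu_{n^2}\xrightarrow{\mathrm{id}}\mu_{n^2}$, and $C^*(U,\zmod n\times_S 0)=C^*_c(U,\zmod n)$ by (1)). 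So: lift the cocycle $(k_n(f),k_{n^2}(f_S),0)$ along the right-hand surjection, apply the differential of the middle complex, and read off the resulting cocycle in $C^2_c(U,\zmod n)$.

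For the lift I keep the components $k_{n^2}(f_S)$ and $0$, and choose a continuous set-theoretic lift $c'\in C^1(U,\mu_{n^2})$ of $k_n(f)$ along $f\colon\mu_{n^2}\to\zmod n$. One should note that $c'=k_{n^2}(f)$ is not available: since $I$ is coprime to $S$ with $I^n=(f^{-1})$ we have $n\mid v_{\mathfrak p}(f)$ but generally $n^2\nmid v_{\mathfrak p}(f)$, so $F(f^{1/n^2})/F$ is ramified at the primes dividing $I$ and $k_{n^2}(f)$ does not descend to $\pi_U$; nonetheless any continuous lift works and the resulting class is independent of it. The differential of $C^*(U,\mu_{n^2}\times_S\mu_{n^2})$ sends $(c',k_{n^2}(f_S),0)$ to $(\delta c',\,0,\,loc_S(c')-k_{n^2}(f_S))$, and both nonzero components are $\mu_n$-valued: $f_*(\delta c')=\delta k_n(f)=0$, and $f_*(loc_S(c')-k_{n^2}(f_S))=loc_S(k_n(f))-k_n(f_S)=0$ since the reduction $\mu_{n^2}\to\zmod n$ carries $k_{n^2}$ to $k_n$ both on $U$ and on the boundary. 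Hence the cochain lies in the subcomplex $C^*_c(U,\zmod n)$, and
\[
d[f]_{S,n}=\big[(c'',\,0,\,a''_S)\big]\in H^2_c(U,\zmod n),\qquad c''=\zeta^{-1}(\delta c'),\quad a''_S=\zeta^{-1}\!\big(loc_S(c')-k_{n^2}(f_S)\big),
\]
where the cocycle $c''$ represents the Bockstein $\delta' k_n(f)\in H^2(U,\zmod n)$ and does not depend on $c'$.

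It remains to identify $[(c'',0,a''_S)]$ with $[I]_{S,n}$. Its image under $H^2_c(U,\zmod n)\to H^2(U,\zmod n)$ is $\delta' k_n(f)$; mapping the Bockstein sequence $0\to\mu_n\to\mu_{n^2}\to\mu_n\to 0$ to the Kummer sequence $0\to\mu_n\to\Gm\xrightarrow{n}\Gm\to 0$ (vertical maps $\mathrm{id}$, inclusion, inclusion) one gets $\zeta_*\delta' k_n(f)=\partial_1(j(k_n(f)))$, where $j\colon H^1(U,\mu_n)\to H^1(U,\Gm)=\Pic(U)$ carries $k_n(f)$ to the class of the fractional ideal $I^{-1}$ (because $(f)=I^{-n}$) and $\partial_1$ is the Kummer boundary $\Pic(U)\to H^2(U,\mu_n)$, which factors through $\Pic(U)/n=Cl_S(F)/n\hookrightarrow H^2(U,\mu_n)$. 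So the image of $d[f]_{S,n}$ in $H^2(U,\zmod n)$ is the class of $I$ in $Cl_S(F)/n$ (up to a sign that the conventions in the statement pin down), as is the image of $[I]_{S,n}$. One then has to check the two classes already agree in $H^2_c$ --- the kernel of $H^2_c(U,\zmod n)\to H^2(U,\zmod n)$ is nonzero in general, and this is precisely where the local term $a''_S$ enters: $loc_S(c')$ is an arbitrary local lift of $loc_S(k_n(f))$ to $\mu_{n^2}$, $k_{n^2}(f_S)$ is the canonical one, and matching their difference with the idelic description $\mathrm{Frob}_I=\rho\big(\textstyle\prod_{\mathfrak p}\pi_{\mathfrak p}^{v_{\mathfrak p}(I)}\big)=\rho\big((f^{-1})_{\mathrm{diag}}\cdot u\big)=\rho(u)$ of $[I]_{S,n}$ (for the reciprocity map $\rho$ and the evident unit idele $u$) shows that $a''_S$ supplies exactly the $S$-component of $u$. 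Equivalently, one can avoid a cochain model for $[I]_{S,n}$ by using that the cup product $H^1(U,\zmod n)\times H^2_c(U,\zmod n)\to\nZ$ is perfect --- $F$ being totally imaginary --- and checking the two classes pair identically against $H^1(U,\zmod n)$; on the image of $\cO_{F,S}^\times/n$ this uses symmetry of $(\cdot,\cdot)$ and the fact that $d[g]_{S,n}=0$ for a unit $g$ (the same computation, with $k_{n^2}(g)$ now an honest lift).

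The step I expect to be the main obstacle is the last one: upgrading the identification from $H^2$ to $H^2_c$, i.e.\ matching the local correction $a''_S$ with the class-field-theoretic description of $[I]_{S,n}$, and keeping the signs and the normalizations of Poitou--Tate duality and of the reciprocity map coherent. Up to the $H^2$-level statement, everything is a routine chase with Bockstein and Kummer sequences.
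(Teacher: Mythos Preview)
Your cochain–level computation of $d[f]_{S,n}$ is correct and produces exactly the cocycle the paper uses: lifting $(k_n(f),k_{n^2}(f_S),0)$ along $C^*(U,\mu_{n^2}\times_S\mu_{n^2})\to C^*(U,\znn)$ by any $c'$ with $f_*(c')=k_n(f)$ and differentiating yields $\alpha=(\delta c',0,\ loc_S(c')-k_{n^2}(f_S))$, which is the paper's representative (written there with $c'=\tkf$). So the first half of your plan agrees with the paper.

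The divergence is in the last step, where you and the paper do genuinely different things. The paper does \emph{not} try to match $d[f]_{S,n}$ with $[I]_{S,n}$ in $H^2_c(U,\zmod n)$ directly or via its image in $H^2(U,\zmod n)$. Instead it first enlarges $S$ to $T=S\cup S'$ so that on $V=U\smallsetminus S'$ one has $H^1(V,\Gm)[n]=\Pic(V)[n]=0$; this forces the Kummer map $\Gm(V)/n\to H^1(V,\mu_n)$ to be \emph{surjective}. It then checks equality by the perfect Poitou--Tate pairing with $H^1(V,\mu_n)$, computing $\langle k_n(g),d[f]_{T,n}\rangle$ for every $T$-unit $g$ by an explicit cocycle manipulation: one subtracts the coboundary of $(k_n(g)\cup c',0,0)$ from $\beta\cup\alpha$ to reduce to the class of $(0,0,(k_n(g))_T\cup k_{n^2}(f_T))$, and then global reciprocity moves the sum $\sum_{v\in T}$ to $-\sum_{v\in T'}$ over $T'=\mathrm{supp}(I)$, where each term is a Hilbert symbol $(g_v,\varpi_v^{e_v})_n$. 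That is precisely $k_n(g)(\mathrm{rec}(I))=\langle k_n(g),[I]_{T,n}\rangle$, and pushing forward along $H^2_c(V,\zmod n)\to H^2_c(U,\zmod n)$ finishes. The enlargement is the crux: without it the Kummer classes of $S$-units do not span $H^1(U,\mu_n)$ and a pairing check against them is inconclusive.

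Your two suggestions for the final step both fall short of this. Approach~1 (matching $a''_S$ with the idelic description of $\mathrm{Frob}_I$) is plausible in outline but is exactly the part you flag as the obstacle, and you have not supplied a cochain model for $[I]_{S,n}$ against which to compare. Approach~2 (pairing against $H^1$, using symmetry and $d[g]_{S,n}=0$ for units $g$) is problematic in a more specific way: when $g\in\cO_{F,S}^\times$ the honest lift $c'=k_{n^2}(g)$ is a genuine cocycle, so the symmetry calculation gives $\langle k_n(g),d[f]_{S,n}\rangle=([g],[f])=([f],[g])=\langle k_n(f),d[g]_{S,n}\rangle=0$. This tells you only that $d[f]_{S,n}$ lies in the annihilator of the image of $\cO_{F,S}^\times/n$, not that it equals $[I]_{S,n}$; and it gives no handle on the complementary subgroup of $H^1(U,\mu_n)$ coming from $\Pic(U)[n]$. (If you tried to fix this by enlarging $S$ as the paper does, the same symmetry shortcut would then purport to show $d[f]_{T,n}=0$, which is false in general --- so the shortcut cannot replace the paper's direct Hilbert-symbol computation; you must actually evaluate the local cup products at $T'$ via reciprocity.)

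In short: keep your cocycle $\alpha$, but for the identification you need the paper's two moves --- enlarge $S$ to kill $\Pic(V)[n]$, and compute $\langle k_n(g),\alpha\rangle$ explicitly, transporting the boundary sum to $\mathrm{supp}(I)$ by global reciprocity --- rather than a symmetry/$d[g]=0$ shortcut.
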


\begin{proof}

Let $T=S\cup S'$ be large enough that for $V=U\setminus S'$, $H^1(V, \Gm)[n]=0$, and such that the support of $I$ is still in $V$. Then
$I$ defines a class $[I]_{T,n}$  in $H^2_c(V, \zmod n)$. Similarly, $f$ defines a class $[f]_{T,n}=[(k_n(f), k_{n^2}(f_T),0)]$ in
$H^1(V, \zmod n \times_S \mu_{n^2})$. It is clear that the elements $[I]_{T,n}$ and $d[f]_{T,n}$ map to $[I]_{S,n}$ and $d[f]_{S,n}$
under the pushforward map $H^2_c(V, \zmod n)\ra H^2_c(U, \zmod n)$. 
Hence, it suffices to prove equality of the elements on $V$. We will prove that the two elements pair the same way with elements of $H^1(V,\mu_n)$.
On $V$, by the exact sequence
$$0\ra \Gm(V)/\Gm(V)^n\ra H^1(V, \mu_n)\ra H^1(V, \Gm)[n]\ra 0,$$
every element of $H^1(V,\mu_n)$ comes from $g\in \Gm(V)$ via the Kummer map.
For this, we can compute the pairing between $\b=(k_n(g), k_{n^2}(g_T), 0)$, which lifts note that $[k_n(g)]$ along $\eta$, and the cocycle representative $\a$ of $d [f]_{T,n}$
$$\a=(d\tkf, 0, (\tkf)_T-k_{n^2}(f_T)),$$
where $\tkf$ is a lift of $k_n(f)$ to $\mu_{n^2}$.
We find
$$\b\cup \a=( k_n(g)\cup d\tkf, 0, -(k_n(g))_T\cup [ (\tkf)_T-k_{n^2}(f_T) ]).$$
We note that the cup product $k_n(g)\cup \tkf$ takes values in $\zmod n \simeq \mu_n \subset \mu_{n^2}$. 
So we have the cochain
$(k_n(g)\cup \tkf,0,0)$ whose differential is $(k_n(g)\cup d\tkf, 0, -(k_n(g))_T\cup (\tkf)_T)$. Hence, it suffices to compute the invariant of
$$(0,0,   (k_n(g))_T\cup k_{n^2}(f_T) )$$
which is homologous to $\beta \cup \alpha$.

Let $T'$ be the support of $I$. Then $T\cup T'$ is the full set of places where the global cocycle $(k_n(g))_T\cup k_{n^2}(f_T)$ with coefficients in
$\mu_n\subset \mu_{n^2}$ is possibly ramified. By global reciprocity, we have
$$\sum_{v\in T}\Inv_v((k_n(g))_T\cup k_{n^2}(f_T) )=-\sum_{v\in T'} \Inv_v ((k_n(g))_{T'}\cup k_{n^2}(f_{T'})).$$
Let $\mathrm{ord}_v(I)=e_v$ and let $\varpi_v$ be a uniformizer at $v$. Then $f_v=u_v\varpi_v^{ne_v}$ for a unit $u_v \in F_v$, so that
$k_{n^2}(f_v)=k_n(u_v\varpi_v^{e_v})$. Also, $F(\sqrt[n]g)$ is unramified at $v\in T'$. Hence, for $v\in T'$, we get
$$ \Inv_v ((k_n(g))_{T'}\cup k_{n^2}(f_{T'}))=(g_v, u_v\varpi_v^{e_v})_n=(g_v, \varpi_v^{e_v})_n,$$
where the bracket $(\cdot, \cdot)_n$ now refers to the $n$-th Hilbert symbol.

Therefore, we conclude that
$$\sum_{v\in T} \Inv_v (k_n(g)_{T'}\cup k_{n^2}(f_{T'})) \vio{=- \sum_{v\in T'} k_n(g)_v(\mathrm{rec}_v(\varpi_v^{e_v}))=k_n(g)(\mathrm{rec}(I))=}\langle [\beta], [I]_{T,n}\rangle,$$
where $\mathrm{rec}_v$ is the local Artin map and $\mathrm{rec}$ is the global Artin map (cf. \cite[p. 174--176]{CF}), 
finishing the proof. \end{proof}

\begin{cor} \label{cor:3.11}
Let $I, J$ be  ideals  in $\cO_F$ supported outside $S$ that are $n$-torsion in the Picard group of $U$. Choose any $f \in F^*$ such that \vio{$I^n = (f^{-1})$} as ideals of $\cO_{F,S}$. Let $T$ be the support of $J$, $\varpi_v$ be a uniformiser at $v$, and $e_v=\mathrm{ord}_v(J)$. Then
$$ht_{S,n}(I,J)=\sum_{v\in T} (f_v, \varpi_v^{e_v})_n,$$
where the bracket denotes the $n$-th Hilbert symbol. 
\end{cor}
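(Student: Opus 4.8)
The plan is to unwind the definition of $ht_{S,n}(I,J)$ and reduce it directly to Proposition~\ref{prop210}. By definition,
$$ht_{S,n}(I,J)=\pair{d^{-1}[I]_{S,n}}{[J]_{S,n}}=\Inv\circ\zeta_*\bigl(d^{-1}[I]_{S,n}\cup [J]_{S,n}\bigr),$$
and by Proposition~\ref{prop210}, for any $f\in F^*$ with $I^n=(f^{-1})$ in $\cO_{F,S}$, the class $[f]_{S,n}=[(k_n(f),k_{n^2}(f_S),0)]\in H^1(U,\zmod n\times_S\mu_{n^2})$ satisfies $d[f]_{S,n}=[I]_{S,n}$. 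Hence we may take $d^{-1}[I]_{S,n}=[f]_{S,n}$, so that
$$ht_{S,n}(I,J)=\Inv\circ\zeta_*\bigl([f]_{S,n}\cup [J]_{S,n}\bigr).$$

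Next I would compute this cup product explicitly. Since $[J]_{S,n}\in H^2_c(U,\zmod n)$ has a representative of the form $(c',0,a_S')$, Lemma~\ref{lem27} says the pairing factors through $\eta$, i.e. it only depends on the image $\eta([f]_{S,n})=[k_n(f)]\in H^1(U,\zmod n)$. So $ht_{S,n}(I,J)$ equals the ordinary cup product pairing $\pair{[k_n(f)]}{[J]_{S,n}}$ in $H^3_c(U,\zmod n)$. This is precisely the kind of pairing evaluated at the end of the proof of Proposition~\ref{prop210}: there, pairing $[k_n(g)]$ against $d[f]$ (equivalently $[I]$) was shown to equal $\sum_{v\in T'}(g_v,\varpi_v^{e_v})_n$ by global reciprocity, local analysis of Kummer classes, and the Artin map. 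I would run the identical computation with the roles reversed: pairing $[k_n(f)]$ against $[J]_{S,n}$, where now $T$ is the support of $J$ and $e_v=\mathrm{ord}_v(J)$. Writing $[J]_{S,n}=\mathrm{rec}(J)$ under class field theory and using that $k_n(f)$ is the Kummer class of $f$, the pairing evaluates to $k_n(f)(\mathrm{rec}(J))=\sum_{v\in T}k_n(f)_v(\mathrm{rec}_v(\varpi_v^{e_v}))=\sum_{v\in T}(f_v,\varpi_v^{e_v})_n$ by the compatibility of the Hilbert symbol with the local Artin map.

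The main subtlety — and the point that needs care rather than genuine difficulty — is the bookkeeping of the enlargement of $S$. As in the proof of Proposition~\ref{prop210}, one first passes to $V=U\setminus S'$ with $T=S\cup S'$ chosen so that $H^1(V,\Gm)[n]=0$ and $V$ still contains the supports of both $I$ and $J$; all classes and pairings are computed on $V$ and then pushed forward, using that pushforward is compatible with the $\Inv$ maps and the localisation sums. One must check that the symmetry of $(\cdot,\cdot)$ (already established) together with Lemma~\ref{lem27} legitimately reduces the two-term-complex cup product to the plain one, and that no extra ramification at places of $S\setminus T$ contributes — this is exactly where the hypothesis that $I,J$ are supported outside $S$ and $n$-torsion in $\mathrm{Pic}(U)$ is used. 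Granting these verifications, the corollary is immediate from Proposition~\ref{prop210} applied with the roles of the two ideals interchanged.
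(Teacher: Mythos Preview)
Your proposal is correct and follows essentially the same route as the paper: invoke Proposition~\ref{prop210} to take $d^{-1}[I]_{S,n}=[f]_{S,n}$, use Lemma~\ref{lem27} so that the pairing depends only on $\eta([f]_{S,n})=[k_n(f)]$, and then evaluate the resulting Poitou--Tate pairing $\langle k_n(f),[J]_{S,n}\rangle$ via the Artin map and Hilbert symbols. The paper's argument is just a more compressed version of yours; in particular your paragraph about re-enlarging $S$ to a smaller $V$ is unnecessary here, since that passage was the mechanism inside the proof of Proposition~\ref{prop210} and is already absorbed into its conclusion.
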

\begin{proof}
By Proposition~~\ref{prop210}, we have $[f]_{S,n} \in H^1(U, \zmod n \times_S \mu_{n^2})$ such that $\eta([f]_{S,n}) =k_n(f)$ and $d [f]_{S,n} = [I]_{S,n} \in \pi_U^{ab} \simeq H^2_c(U,\zmod n)$. The pairing $ht_{S,n}(I,J)$ is given by the Poitou-Tate pairing $\langle k_n(f), [J]_{S,n} \rangle$, which is equal to $k_n(f)([J]_{S,n}) = \sum_{v \in T} (f_v,\varpi_v^{e_v})_n$ by the local-global compatibility of Artin maps. 
\end{proof}





\section{Arithmetic linking, class invariants and the Artin map}\label{section4}

We continue with the assumption of a fixed  trivialization $\zeta:\Z/n\Z\simeq\mu_n$ over the totally imaginary number field $F$. 

Let us recall the construction of the class invariant homomorphism
\[
\Psi: H^1(X,\Z/n\Z)\to \Cl(X)
\]
of Waterhouse \cite{Wat} and M. Taylor \cite{Tay}. Suppose $x\in H^1(X, \Z/n\Z)$ is the class of the $\Z/n\Z$-torsor given 
as the spectrum of an \'etale $\O_F$-algebra $\O$ with $\Z/n\Z$-action. To avoid confusion we will write 
$\sigma_a(v)$ for the effect of the action of $a\in \Z/n\Z={\rm Gal}(\O/\O_F)$ on $v\in \O$. We consider the $\O_F$-module ${\calL}$ consisting of all elements $v\in \O$ such that
\[
\sigma_a( v) = \zeta(a)\cdot   v 
\]
for all $a\in \Z/n\Z$. Using \'etale descent along the extension $\O/\O_F$ we can easily see that 
${\calL}$ is $\O_F$-locally free of rank $1$. Then we set $\Psi(x)=\Psi(\O/\O_F)$ to be the class of $\calL$ in $\Pic(X)=\Cl(X)$.
This homomorphism $\Psi$ can also be viewed as follows: 
The $\Z/n\Z\simeq\mu_n$-torsor over $X$
that corresponds to $x$ induces by $\mu_n\to \Gm$ a $\Gm$-torsor, i.e.
a line bundle whose class is $\Psi(x)$. 

This construction plays a central role in the theory
of Galois module structure; indeed, $\Psi(x)$ is an important invariant of the structure 
of $\O$ as an $\O_F[\Z/n\Z]=\O_F[x]/(x^n-1)$-module. The general form of the class invariant homomorphism 
for the constant group scheme $\Z/n\Z$ with Cartier dual $\bd{\mu}_n$ is
\[
H^1(X, \Z/n\Z)\to {\rm Pic}((\bd{\mu}_n)_{X})={\rm Pic}(\O_F[x]/(x^n-1)).
\]
(See, for example, \cite{Wat}.)
The map $\Psi$ above is obtained by composing the above with the restriction along the
section
$X\to (\bd{\mu}_n)_X$ given by $x\mapsto \zeta(1)$.

Combining this with class field theory allows us to define the {\rm class invariant} pairing 
\[
(\cdot , \cdot )_c: (\Cl(X)/n)^\vee\times (\Cl(X)/n)^\vee\to \Z/n\Z
\]
as follows: Take $f$, $f' \in  (\Cl(X)/n)^\vee={\rm Hom}_{\Z}(\Cl(X), \Z/n\Z)$. By class field theory,
$f$ and $f'$ correspond to unramified $\Z/n\Z$-extensions $K_f$ and $K_{f'}$ of $F$. Let $\O_f$ and $\O_{f'}$ be the normalisations of $\O_F$ in $K_f$ and $K_{f'}$ respectively; these are \'etale $\O_F$-algebras with $\Z/n\Z$-action. By definition, the class invariant pairing is
\[
(f, f')_c:=f'(\Psi(\O_f/\O_F)).
\]

\begin{thm}\label{cithm}
 Under the class field theory isomorphism
\[
 {\mathrm {Ar}}:  H^1(X, \Z/n\Z)\xrightarrow{\sim} (\Cl(X)/n)^\vee,
 \]
the class invariant pairing
\[
(\cdot ,\cdot )_c: (\Cl(X)/n)^\vee\times (\Cl(X)/n)^\vee\to \Z/n\Z
\]
is identified with the   pairing 
\[
(\cdot,\cdot): H^1(X, \Z/n\Z)\times H^1(X, \Z/n\Z)\to \frac{1}{n}\Z/\Z=\Z/n\Z, \quad (\alpha, \beta)={\rm Inv} \circ\zeta_*(\alpha\cup \delta'\beta),
\]
 defined as in Section \ref{section2}. 
\end{thm}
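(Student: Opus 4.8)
The plan is to reduce Theorem \ref{cithm} to the single assertion that, under the isomorphism $\Ext^2_X(\zmod n,\Gm)\simeq\Cl(X)/n$, the coboundary map $d$ of Section \ref{section2} is the reduction modulo $n$ of the class invariant homomorphism $\Psi\colon H^1(X,\zmod n)\to\Cl(X)$. Indeed, exactly as in Section \ref{section2} (using $\zeta_*\delta'=\delta\zeta_*$ and $\Inv(a\cup b)=\pair{a}{r(b)}$), the pairing to be matched can be written as $(\alpha,\beta)=\pair{\alpha}{d\beta}$, where $\pair{\cdot}{\cdot}$ is the Artin--Verdier pairing $H^1(X,\zmod n)\times\Ext^2_X(\zmod n,\Gm)\to\nZ$. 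So the crux is the claim that $d$, viewed as a map $H^1(X,\zmod n)\to\Ext^2_X(\zmod n,\Gm)\simeq\Cl(X)/n$, sends $x$ to $\Psi(x)\bmod n$.

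To prove the claim I would argue in three steps. First, since multiplication by $n$ is an epimorphism of \'etale sheaves on $X$, the complex $R\ud{\Hom}_X(\zmod n,\Gm)$ is quasi-isomorphic to $\mu_n$ placed in degree $0$; consequently $\Ext^i_X(\zmod n,\Gm)\simeq H^i(X,\mu_n)$ canonically, and under this identification the map $r$ of Section \ref{section2} is the canonical isomorphism, the Yoneda evaluation pairing and the cup-product pairing both being induced by $\zmod n\otimes\mu_n\to\mu_n$, $a\otimes\xi\mapsto\xi^a$. Second, the inclusions $\mu_n\hookrightarrow\mu_{n^2}\hookrightarrow\Gm$ give a morphism of short exact sequences of \'etale sheaves
\[
\xymatrix{
0 \ar[r] & \mu_n \ar[r] \ar@{=}[d] & \mu_{n^2} \ar[r] \ar[d] & \mu_n \ar[r] \ar[d] & 0 \\
0 \ar[r] & \mu_n \ar[r] & \Gm \ar[r]^-{n} & \Gm \ar[r] & 0,
}
\]
and functoriality of connecting homomorphisms identifies the Bockstein $\delta\colon H^1(X,\mu_n)\to H^2(X,\mu_n)$ with the composite of the natural map $H^1(X,\mu_n)\to H^1(X,\Gm)=\Cl(X)$ and the Kummer connecting map $\Cl(X)\to H^2(X,\mu_n)$. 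Third, because $H^2(X,\Gm)[n]=0$ for the totally imaginary field $F$ — this is precisely what makes $\Ext^2_X(\zmod n,\Gm)\simeq\Cl(X)/n$ — the Kummer connecting map is exactly the surjection $\Cl(X)\to\Cl(X)/n\hookrightarrow H^2(X,\mu_n)$, i.e.\ reduction mod $n$. Since $\Psi$ is by definition the composite $H^1(X,\zmod n)\xrightarrow{\zeta_*}H^1(X,\mu_n)\to H^1(X,\Gm)=\Cl(X)$ (see the start of Section \ref{section4}), combining the three steps gives $d=r\circ\delta\circ\zeta_*\colon x\mapsto\Psi(x)\bmod n$, which is the claim.

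Granting the claim, the theorem follows quickly. By the compatibility of Artin--Verdier duality with class field theory (see \cite{Maz}), under $\mathrm{Ar}$ and $\Ext^2_X(\zmod n,\Gm)\simeq\Cl(X)/n$ the pairing $\pair{\cdot}{\cdot}$ becomes the evaluation pairing $\Hom(\Cl(X),\zmod n)\times\Cl(X)/n\to\zmod n$; hence
\[
(\alpha,\beta)=\pair{\alpha}{d\beta}=\mathrm{Ar}(\alpha)(\Psi(\beta)),
\]
which, by the definition of the class invariant pairing, is $(\mathrm{Ar}(\beta),\mathrm{Ar}(\alpha))_c$. Since $(\cdot,\cdot)$ is symmetric by Lemma \ref{lem11}, this equals $(\mathrm{Ar}(\alpha),\mathrm{Ar}(\beta))_c$, which is the desired identification — and, as a byproduct, shows that the class invariant pairing itself is symmetric.

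I expect the main obstacle to be not conceptual but a matter of normalisation: one must check that $r$ is the \emph{canonical} isomorphism rather than a twist, pin down the sign in the Artin--Verdier/class-field-theory compatibility and verify it agrees with the Kummer-theoretic identification $\Ext^2_X(\zmod n,\Gm)\simeq\Cl(X)/n$ used in the argument, and reconcile the presentations of $\zmod n$ implicit in $\delta'$ and in the definition of $\Psi$. Each of these contributes at most a sign in $(\zmod n)^\times$, and one must see that they cancel for the identification to be exact. Should a more computational route be preferred, one can instead realise $\alpha$ and $\beta$ by unramified cyclic extensions $F(\sqrt[n]{g})$ with $(g)=\fa^n$, compute $\Psi$ as the class of the fractional ideal $\fa^{-1}$, and evaluate $(\alpha,\beta)$ as a sum of Hilbert symbols in the manner of Proposition \ref{prop210}; there the obstacle becomes the familiar subtleties of Kummer theory over $\Spec\cO_F$.
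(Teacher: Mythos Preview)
Your argument is correct (under the paper's standing hypothesis $\mu_{n^2}\subset F$) and takes a genuinely different route from the paper's. You compute $d$ directly: the morphism of short exact sequences $(\mu_n\hookrightarrow\mu_{n^2}\twoheadrightarrow\mu_n)\to(\mu_n\hookrightarrow\Gm\xrightarrow{n}\Gm)$ factors the Bockstein $\delta$ through $\Cl(X)$, and since ${\rm Br}(X)[n]=0$ this gives $d\beta=\Psi(\beta)\bmod n$; pairing with $\alpha$ via Artin--Verdier and class field theory then yields $(\alpha,\beta)=\mathrm{Ar}(\alpha)(\Psi(\beta))$. The paper instead dualises: it writes $D(\alpha)=\delta'^\vee(\phi_\alpha)$ with $\phi_\alpha\in H^2(X,\zmod n)^\vee\simeq\Ext^1_X(\zmod n,\Gm)$, proves a separate proposition factoring $\delta'^\vee$ through $\Cl(X)[n]\to\Cl(X)/n$ by a Yoneda-product argument, and identifies $\phi_\alpha\mapsto\Psi(\alpha)\in\Cl(X)[n]$, arriving at $(\alpha,\beta)=\mathrm{Ar}(\beta)(\Psi(\alpha))$. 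The trade-off is that your Kummer-sequence route is shorter and more transparent, but it lands on $(\mathrm{Ar}(\beta),\mathrm{Ar}(\alpha))_c$ and must invoke Lemma~\ref{lem11} to swap the entries; both that lemma and the identity $\zeta_*\delta'=\delta\zeta_*$ you use at the outset need $\mu_{n^2}\subset F$. The paper's dual route hits $(\mathrm{Ar}(\alpha),\mathrm{Ar}(\beta))_c$ on the nose without ever appealing to symmetry, so it proves the theorem assuming only $\mu_n\subset F$ (which is why symmetry of $(\cdot,\cdot)_c$ is recorded only as a corollary under the extra hypothesis). Your caveat about normalisations is well taken: the compatibility of the $R\ud{\Hom}$ identification $\Ext^2_X(\zmod n,\Gm)\simeq H^2(X,\mu_n)\simeq\Cl(X)/n$ with the connecting map $\partial'$ and with $\mathrm{Ar}$ is precisely the bookkeeping that the paper's explicit Yoneda calculus is set up to handle.
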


\begin{rem}$~$
\begin{enumerate}[a)]
\item
It follows that  the arithmetic Chern-Simons invariant 
\[
CS_c: H^1(X, \Z/n\Z)\to \Z/n\Z, \quad CS_c(x)=(x, x),
\]
for $c=Id\cup \tilde{\delta} (Id)$ can be identified under  $ (\Cl(X)/n)^\vee\simeq H^1(X, \Z/n\Z)$
with the quadratic form $(\Cl(X)/n)^\vee\to \Z/n\Z$, $f\mapsto (f, f)_c$, of the class invariant pairing $(\cdot, \cdot )_c$.
This statement
was first shown in \cite{BCGKPT} by a different argument. This result of \cite{BCGKPT} inspired us to 
obtain the above theorem.

\item
Under the additional hypothesis that $\mu_{n^2}\subset F$, the pairing $(\cdot,\cdot )$ is symmetric and agrees
with the pairing defined in Section \ref{section2}. This follows from Lemma \ref{lem11} and its proof.
\end{enumerate}
\end{rem}

\begin{cor}
Assuming   $\mu_{n^2}\subset F$, the class invariant pairing $(\cdot ,\cdot )_c$ is symmetric.  
\end{cor}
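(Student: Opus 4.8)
The plan is to derive the corollary formally from Theorem~\ref{cithm} together with the symmetry proved in Section~\ref{section2}. First I would apply Theorem~\ref{cithm}: the class field theory isomorphism $\mathrm{Ar}\colon H^1(X,\Z/n\Z)\xrightarrow{\sim}(\Cl(X)/n)^\vee$ carries the cohomological pairing $(\alpha,\beta)=\Inv\circ\zeta_*(\alpha\cup\delta'\beta)$ to the class invariant pairing $(\cdot,\cdot)_c$. Since one and the same isomorphism $\mathrm{Ar}$ is used on each of the two arguments, this identification intertwines the flip $(\alpha,\beta)\mapsto(\beta,\alpha)$ on the cohomology side with the flip $(f,f')\mapsto(f',f)$ on the class-group side; hence $(\cdot,\cdot)_c$ is symmetric if and only if $(\cdot,\cdot)$ is.

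Next I would invoke Lemma~\ref{lem11}. Under the standing hypothesis $\mu_{n^2}\subset F$, choosing an isomorphism $\Z/n^2\Z\simeq\mu_{n^2}$ compatible with $\zeta$ gives $\zeta_*\circ\delta'=\delta\circ\zeta_*$, and the adjunction $\Inv(a\cup b)=\pair{a}{r(b)}$ defining $r$ shows that $(\alpha,\beta)=\Inv\circ\zeta_*(\alpha\cup\delta'\beta)=\pair{\alpha}{r(\delta\zeta_*\beta)}=\pair{\alpha}{d\beta}$, which is exactly the pairing of Section~\ref{section2}. Lemma~\ref{lem11} then gives $(\alpha,\beta)=(\beta,\alpha)$; recall that its proof rests on the vanishing of the degree-$2$ Bockstein $\delta_2$ (because $H^3(X,\Z_n(1))\simeq\Z_n$ is torsion-free) combined with the Leibniz rule $\delta_2(\alpha\cup\beta)=\delta'\alpha\cup\beta-\alpha\cup\delta\beta$. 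Composing the two steps yields the symmetry of $(\cdot,\cdot)_c$.

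I do not expect a genuine obstacle: the argument is the two-line deduction above, and indeed the corollary can be read off directly from the remark following Theorem~\ref{cithm}. The only point deserving a word of care is the naturality claim in the first step --- that the identification furnished by Theorem~\ref{cithm} is natural in \emph{both} variables simultaneously, so that transposing the two copies of $H^1(X,\Z/n\Z)$ really does correspond to transposing the two copies of $(\Cl(X)/n)^\vee$. This is immediate from the way the pairing and $\mathrm{Ar}$ are set up, and once it is granted the symmetry transports with no further work.
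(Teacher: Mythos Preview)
Your proposal is correct and follows essentially the same route as the paper: the paper's proof simply cites Lemma~\ref{lem11} (and its proof) together with Theorem~\ref{cithm}, which is exactly the two-step deduction you spell out. Your additional remarks---on why the identification of Theorem~\ref{cithm} is natural in both variables and on how the hypothesis $\mu_{n^2}\subset F$ enters through $\zeta_*\circ\delta'=\delta\circ\zeta_*$---are a faithful unpacking of what the paper leaves implicit.
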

\begin{proof}
This follows from Lemma \ref{lem11} and its proof and Theorem \ref{cithm}.
\end{proof}

\begin{proof}[Proof of Theorem \ref{cithm}]
Recall that Artin-Verdier duality \cite{Maz} gives isomorphisms
\begin{equation}\label{1}
H^1(X, \Z/n\Z)^\vee\simeq \Ext^{2}_X(\Z/n\Z, \Gm),\quad H^2(X, \Z/n\Z)^\vee\simeq \Ext^{1}_X(\Z/n\Z, \Gm).
\end{equation}
Applying $\Ext^i_X(-,\Gm)$ to $0\to \Z\xrightarrow{\times n} \Z\to \Z/n\Z\to 0$ gives an exact sequence
\begin{equation}\label{exact1}
0\to \Ext^0_X(\Z, \Gm)/n\xrightarrow{\partial} \Ext^1_X(\Z/n\Z, \Gm)\to \Ext^1_X(\Z, \Gm)[n]=\Cl(X)[n]\to 0,
\end{equation}
where the connecting $\partial$ is  given via the Yoneda product
\[
\Ext^0_X(\Z, \Gm)\times \Ext^1(\Z/n\Z, \Z)\to \Ext^1_X(\Z/n\Z, \Gm)
\]
with the class of $R(n)=(0\to \Z\xrightarrow{\times n} \Z\to \Z/n\Z\to 0)$. This combined with (\ref{1}) induces
a surjective homomorphism
\[
h: H^2(X,\Z/n\Z)^\vee\to \Cl(X)[n].
\]
Similarly, we have
\[
0\to \Ext^1_X(\Z, \Gm)/n\xrightarrow{\partial'} \Ext^2_X(\Z/n\Z, \Gm)\to \Ext^2_X(\Z, \Gm)[n]={\rm Br}(X)[n]=0,
\]
in other words, an isomorphism
\begin{equation*}\label{artin1}
\partial': \Cl(X)/n\xrightarrow{\simeq } \Ext^2_X(\Z/n\Z, \Gm).
\end{equation*}
The composition of $\partial'$ with the duality   
$\Ext^2_X(\Z/n\Z, \Gm)\simeq H^1(X,\Z/n\Z)^\vee$ is the dual ${\mathrm {Ar}}^\vee$ of the isomorphism
\[
{\mathrm {Ar}}: H^1(X,\Z/n\Z)\xrightarrow{\sim} (\Cl(X)/n)^\vee
\]
given by the Artin map of class field theory, i.e. ${\mathrm {Ar}}(x)$ is the Artin reciprocity map 
$\Cl(X)\to \Z/n\Z$ for the $\Z/n\Z$-torsor given by $x$ (see \cite[p. 539]{Maz}).

 Taking Yoneda product with the class
  \[
  [E(n)]=(0\to \Z/n\Z\to \Z/{n^2}\Z\to \Z/n\Z\to 0)
  \]
   in $\Ext^1_\Z(\Z/n\Z, \Z/n\Z)$
gives the Bockstein homomorphisms:
\begin{equation*}\label{B1}
\delta'': \Ext^{1}_X(\Z/n\Z, \Gm)\to \Ext^{2}_X(\Z/n\Z, \Gm),
\end{equation*}
\begin{equation*}\label{B2}
\delta': \Ext^{1}_X(\Z, \Z/n\Z)=H^1(X,\Z/n\Z)\to \Ext^{2}_X(\Z, \Z/n\Z)=H^2(X, \Z/n\Z).
\end{equation*}
Under the duality isomorphisms (\ref{1}), the   dual $\delta'^\vee$
is identified with the Bockstein $\delta''$. This easily follows 
from the fact that the Artin-Verdier duality pairings 
are also given via Yoneda products.

\begin{prop}\label{Bockprop}
The dual $\delta'^\vee: \rH^2(X, \Z/n\Z)^\vee\to \rH^1(X, \Z/n\Z)^\vee $ of the Bockstein homomorphism $\delta'$
is equal to the composition
\[
H^2(X, \Z/n\Z)^\vee\xrightarrow{\ h\ } \Cl(X)[n]\to \Cl(X)/n\xrightarrow{{\mathrm {Ar}}^\vee} H^1(X,\Z/n\Z)^\vee.
\]
where the map $\Cl(X)[n]\to \Cl(X)/n$ is induced by the identity on $\Cl(X)$.
\end{prop}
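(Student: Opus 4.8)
The plan is to transport the entire statement to the $\Ext$-side of Artin--Verdier duality, where every map occurring in it is a Yoneda product, and then to finish by associativity of the Yoneda product together with one elementary identity between extensions of abelian groups.

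First I would record the three identifications already set up above. Under the duality isomorphisms (\ref{1}) the dual $\delta'^\vee$ corresponds to the Bockstein $\delta''\colon \Ext^1_X(\Z/n\Z,\Gm)\to \Ext^2_X(\Z/n\Z,\Gm)$; the surjection $h$ corresponds to the map $q\colon \Ext^1_X(\Z/n\Z,\Gm)\to \Ext^1_X(\Z,\Gm)[n]=\Cl(X)[n]$ appearing in (\ref{exact1}); and $\mathrm{Ar}^\vee$ corresponds to the isomorphism $\partial'\colon \Cl(X)/n\xrightarrow{\sim}\Ext^2_X(\Z/n\Z,\Gm)$. Since these are isomorphisms compatible with the duality isomorphisms at the two ends, the proposition is equivalent to the purely homological assertion
\[
\partial'\circ\nu\circ q=\delta''\colon \Ext^1_X(\Z/n\Z,\Gm)\longrightarrow \Ext^2_X(\Z/n\Z,\Gm),
\]
where $\nu\colon \Cl(X)[n]\to \Cl(X)/n$ is the map induced by the identity of $\Cl(X)$.

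Next I would rewrite each of $q$, $\partial'$ and $\delta''$ as a Yoneda product. The map $q$, being the one occurring in the long exact sequence of $\Ext^*_X(-,\Gm)$ applied to $R(n)$, is restriction along the quotient $p\colon \Z\twoheadrightarrow \Z/n\Z$, so $q(\xi)=\xi\circ p$; since $np=0$ in $\Hom_\Z(\Z,\Z/n\Z)$, the class $\xi\circ p$ is automatically killed by $n$ in $\Ext^1_X(\Z,\Gm)=\Cl(X)$, so $\nu(q(\xi))$ may be lifted back to $\xi\circ p$ itself when computing $\partial'$. The connecting map $\partial'$ of the same long exact sequence is Yoneda product with $[R(n)]\in \Ext^1_\Z(\Z/n\Z,\Z)$, so $\partial'(\nu(q(\xi)))=(\xi\circ p)\circ[R(n)]$, which by associativity of the Yoneda product equals $\xi\circ\bigl(p\circ[R(n)]\bigr)$. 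Since $\delta''(\xi)=\xi\circ[E(n)]$ by definition, everything collapses to the single identity $p\circ[R(n)]=[E(n)]$ in $\Ext^1_\Z(\Z/n\Z,\Z/n\Z)$.

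Finally I would prove that identity by exhibiting $E(n)$ as the pushforward of $R(n)$ along $p$. The commutative ladder
\[
\xymatrix{
0\ar[r] & \Z\ar[r]^-{\times n}\ar[d]_-{p} & \Z\ar[r]^-{p}\ar[d] & \Z/n\Z\ar[r]\ar@{=}[d] & 0\\
0\ar[r] & \Z/n\Z\ar[r]^-{\times n} & \Z/{n^2}\Z\ar[r] & \Z/n\Z\ar[r] & 0
}
\]
with middle vertical map the canonical reduction $\Z\to \Z/{n^2}\Z$, has the identity as its right-hand vertical map; hence the comparison map from the pushout of its top row along $p$ to its bottom row is a morphism of short exact sequences that is the identity on both the sub- and the quotient-object, so it is an isomorphism by the five lemma. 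This is precisely the assertion $[E(n)]=p\circ[R(n)]$. I expect the only point demanding genuine care to be the bookkeeping of the identifications in the first two paragraphs---verifying that $h$, $\mathrm{Ar}^\vee$ and the relevant Artin--Verdier duality isomorphisms really do become the stated Yoneda-theoretic maps, and that $\nu$ is handled correctly via the lift above; the pushout computation and the associativity step are routine.
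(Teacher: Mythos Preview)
Your proposal is correct and follows essentially the same approach as the paper: both transport the statement to the $\Ext$-side via Artin--Verdier duality, express the relevant maps as Yoneda products, and ultimately rest on the fact that $E(n)$ is the pushout of $R(n)$ along $p\colon \Z\to\Z/n\Z$ (exactly your commutative ladder). The only organizational difference is that the paper first shows $\delta''\circ\partial=0$ using $[R(n)]\cup[E(n)]=0$ in $\Ext^2_\Z(\Z/n\Z,\Z)=0$, thereby obtaining a factorization of $\delta''$ through $\Cl(X)[n]$, and \emph{then} identifies the resulting map $\epsilon\colon\Cl(X)[n]\to\Cl(X)/n$ with the identity-induced map via the pushout diagram; you bypass the vanishing step and compute $\partial'\circ\nu\circ q$ directly, reducing immediately to $p\circ[R(n)]=[E(n)]$ by associativity of the Yoneda product. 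Your route is slightly more economical, but the substance is the same.
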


 \begin{proof}
Consider the composition $\delta'\circ \partial$ where $\partial$ is as in (\ref{exact1}).
The connecting $\partial$ is  given as Yoneda product
\[
\Ext^0_X(\Z, \Gm)\times \Ext^1(\Z/n\Z, \Z)\to \Ext^1_X(\Z/n\Z, \Gm)
\]
with the class of $R(n)=(0\to \Z\to \Z\to \Z/n\Z\to 0)$.
Hence the composition $\delta'' \circ \partial$ is given by
\[
(\beta' \circ\partial) (a)=a\cup [R(n)]\cup [E(n)].
\]
But $[R(n)]\cup [E(n)]=0$, since $\Ext^2_\Z(\Z/n\Z, \Z)=(0)$. Therefore,   $\delta'' $ 
factors through the quotient by the image of $\partial$:
\[
\delta'': \Ext^1_X(\Z/n\Z, \Gm)\to \Ext^1_X(\Z, \Gm)[n]=\Cl(X)[n]\xrightarrow{} \Ext^2_X(\Z/n\Z, \Gm).
\]
Combining this with the isomorphism $\partial'$ gives a factorization of $\delta''$ as
a composition
\[
\Ext^1_X(\Z/n\Z, \Gm)\to  \Cl(X)[n]\xrightarrow{\epsilon} \Cl(X)/n\xrightarrow{\sim } \Ext^2_X(\Z/n\Z, \Gm).
\]
Since duality identifies $\delta'^\vee$ with $\delta''$ it remains to see that, 
in the above, $\epsilon$ is induced by the identity map on $\Cl(X)$:

Write an element $y\in \Cl(X)[n]$ as the extension $1\to\Gm\to J'\to \Z\to 0$
coming from pulling back $x=(1\to\Gm\to J\to \Z/n\Z\to 0)\in \Ext^1_X(\Z/n\Z, \Gm)$ via $\Z\to \Z/n\Z$.
Then $\delta''(x)$ is the class of
\[
1\to \Gm\to J\to \Z/{n^2}\Z\to \Z/n\Z\to 0
\]
concatenating $x$ with $E(n)$. On the other hand, $y$ corresponds under $\Cl(X)/n\xrightarrow{\simeq } \Ext^2_X(\Z/n\Z, \Gm)$ to the extension 
\[
1\to \Gm\to J'\to \Z\to \Z/n\Z\to 0 
\]
obtained by concatenating $1\to\Gm\to J'\to \Z\to 0$ with $R(n): 0\to \Z\to \Z\to \Z/n\Z\to 0$.
Pushing out $ R(n): 0\to \Z\to \Z\to \Z/n\Z\to 0$ by $\Z\to \Z/n\Z$ gives
$ E(n): 0\to \Z/n\Z\to \Z/{n^2}\Z\to \Z/n\Z\to 0$ and so we have a commutative
diagram
\[
\xymatrix{
1 \ar[r]  & \Gm \ar[r] & J \ar[r] & \zmod {n^2} \ar[r] & \zmod n \ar[r]  & 0 \\
1 \ar[r] & \Gm \ar[u] \ar[r] & J' \ar[u] \ar[r] & \Z \ar[u] \ar[r] & \zmod n \ar[u] \ar[r] & 0
}
\]
which shows the statement. This concludes the proof of the Proposition.
\end{proof}

By the definition of the arithmetic linking pairing
\[
(\cdot , \cdot ) :\ H^1(X, \Z/n\Z)\times H^1(X, \Z/n\Z)\to \frac{1}{n}\Z/\Z=\Z/n\Z
\]
the corresponding homomorphism $D: H^1(X, \Z/n\Z)\to H^1(X, \Z/n\Z)^\vee$
(i.e. with $D(x)(x')=(x, x')$)
is given as the composition
\[
H^1(X, \Z/n\Z)\to H^2(X, \Z/n\Z)^\vee\xrightarrow{\delta'^\vee} H^1(X, \Z/n\Z)^\vee
\]
 of the homomorphism given by cup product and Artin-Verdier duality with the dual of the Bockstein. 
 By combining this with Proposition \ref{Bockprop} we see that $D$ is the composition  
 \begin{equation*}\label{result}
H^1(X,\Z/n\Z)\xrightarrow{\ } H^2(X,\Z/n\Z)^\vee\xrightarrow{\ h\ } \Cl(X))[n]\to \Cl(X)/n\xrightarrow{{\mathrm {Ar}}^\vee} H^1(X,\Z/n\Z)^\vee.
\end{equation*}

\begin{lem}
Suppose that the $\Z/n\Z$-torsor $x\in  H^1(X,\Z/n\Z)$ has generic fiber $ F(\xi^{1/n})/F$ where $\xi\in F^*$ is a Kummer generator. Then the fractional ideal of $F$ generated by $\xi$ is the $n$-th power
$
(\xi)= I^n
$
of a well-defined fractional ideal $I$ of $F$; the class $[I]=[I(x)]\in \Cl(X)$ only depends on $x$, is $n$-torsion,
and is equal to the image $\Psi(x)$ of the class invariant homomorphism.
The image of $x$ under the composition of the first two maps above
\[
 H^1(X,\Z/n\Z)\xrightarrow{\ }  H^2(X,\Z/n\Z)^\vee\xrightarrow{\ h\ } \Cl(X)[n]
\]
is $\Psi(x)=[I(x)]$. 
\end{lem}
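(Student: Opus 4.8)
The plan is to treat the statement in two parts: the purely ideal-theoretic assertions together with $\Psi(x)=[I(x)]$, and then the computation of the composite $g\colon H^1(X,\Z/n\Z)\to H^2(X,\Z/n\Z)^\vee\xrightarrow{h}\Cl(X)[n]$.

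For the first part, since $x$ is a class on all of $X=\Spec(\O_F)$, the Kummer extension $F(\xi^{1/n})/F$ is everywhere unramified, which in particular forces $v_\fp(\xi)\equiv 0\pmod n$ for every prime $\fp$ (otherwise $F(\xi^{1/n})/F$ would contain a subextension ramified at $\fp$); hence $(\xi)=I^n$ with $I=\prod_\fp\fp^{v_\fp(\xi)/n}$. Replacing $\xi$ by another Kummer generator $\xi\eta^n$ replaces $I$ by $I\cdot(\eta)$, so $[I]\in\Cl(X)$ depends only on $x$, and $[I]^n=[(\xi)]=0$. Finally, by the alternative description of $\Psi$ recalled in the excerpt, $\Psi(x)$ is the class of the line bundle underlying the $\Gm$-torsor obtained from the $\mu_n$-torsor $\zeta_*x$ along $\mu_n\hookrightarrow\Gm$; in other words $\Psi(x)=\kappa(x)$, where $\kappa$ denotes the composite $H^1(X,\Z/n\Z)\xrightarrow{\zeta_*}H^1(X,\mu_n)\xrightarrow{(\mu_n\hookrightarrow\Gm)_*}\Pic(X)=\Cl(X)$, and the standard Kummer dictionary identifies $\kappa(x)$ with $[I(x)]$ (one may also verify this by computing the eigenmodule $\calL$ locally at each $\fp$). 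Thus $\Psi(x)=\kappa(x)=[I(x)]$, and it remains only to prove $g=\kappa$.

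For $g=\kappa$, I would evaluate both maps as functionals on $H^2(X,\Z)/n$ by means of Artin--Verdier duality for the sheaf $\Z$, under which $\Cl(X)=\Ext^1_X(\Z,\Gm)\simeq H^2(X,\Z)^\vee$ and hence $\Cl(X)[n]\simeq(H^2(X,\Z)/n)^\vee$. By naturality of the duality pairings (which are cup, equivalently Yoneda, products), the map $\Ext^1_X(\Z/n\Z,\Gm)\to\Ext^1_X(\Z,\Gm)[n]$ of \eqref{exact1}, i.e. pullback of extensions along $\Z\twoheadrightarrow\Z/n\Z$, corresponds on the dual side to the map $H^2(X,\Z)\to H^2(X,\Z/n\Z)$ induced by $\Z\to\Z/n\Z$, which factors through the natural injection $\iota\colon H^2(X,\Z)/n\hookrightarrow H^2(X,\Z/n\Z)$; consequently $h$ is identified with $\iota^\vee$. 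Therefore, for $y\in H^2(X,\Z)$ with image $\bar y$ in $H^2(X,\Z)/n$,
\[
g(x)(\bar y)=\Inv\circ\zeta_*\bigl(x\cup\iota(\bar y)\bigr)=\Inv\circ\zeta_*\bigl(x\cup y\bigr)=\Inv\bigl((\zeta_*x)\cup y\bigr)\in\nZ ,
\]
the second equality being functoriality of the cup product in the second coefficient $\Z\to\Z/n\Z$ and the third being functoriality in the first coefficient $\zeta\colon\Z/n\Z\xrightarrow{\sim}\mu_n$, so that $(\zeta_*x)\cup y\in H^3(X,\mu_n)$. On the other hand, Artin--Verdier duality for $\Z$ together with the same functoriality gives
\[
\kappa(x)(\bar y)=\Inv_{\Gm}\bigl(\kappa(x)\cup y\bigr)=\Inv_{\Gm}\bigl((\mu_n\hookrightarrow\Gm)_*((\zeta_*x)\cup y)\bigr) .
\]
Since the invariant isomorphism $\Inv\colon H^3(X,\mu_n)\xrightarrow{\sim}\nZ$ is the restriction of $\Inv_{\Gm}\colon H^3(X,\Gm)\xrightarrow{\sim}\Q/\Z$ along $(\mu_n\hookrightarrow\Gm)_*$, the two right-hand sides coincide, so $g(x)(\bar y)=\kappa(x)(\bar y)$ for all $\bar y$; hence $g(x)=\kappa(x)=\Psi(x)=[I(x)]$.

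The whole argument is essentially formal once the identifications are set up, so I do not expect a serious conceptual obstacle. The only points requiring care are the naturality inputs — that the Artin--Verdier pairings are cup/Yoneda products natural in the coefficient sheaf (so that $h=\iota^\vee$), that the cup product commutes with the coefficient maps $\Z\to\Z/n\Z$ and $\zeta$, and that the invariant maps for $\mu_n$ and $\Gm$ are compatible — all of which are standard and already implicit in the constructions made in the excerpt. The real bookkeeping, and where a sign error is most likely to creep in, is in tracking the various identifications $\Z/n\Z\simeq\mu_n$ through the duality and cup-product formalism.
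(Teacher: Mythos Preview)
Your proof is correct, and for the first part (the ideal-theoretic assertions and $\Psi(x)=[I(x)]$) it runs parallel to the paper's, which simply notes that $\calL\otimes_{\O_F}F\simeq F\cdot\xi^{1/n}$ and sets $I=i(\calL)$ under this identification. Your ramification argument for $(\xi)=I^n$ is fine at all primes, including those above $n$: if $F_\fp(\xi^{1/n})/F_\fp$ is unramified then $\xi^{1/n}$ lies in the maximal unramified extension, where the valuation is still $\Z$-valued, so $v_\fp(\xi)=n\cdot v_\fp(\xi^{1/n})\in n\Z$.

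For the second part your route genuinely differs from the paper's. The paper does not argue directly: it simply cites Mazur's explicit description of $\Ext^1_X(\Z/n\Z,\Gm)$ and of the local duality pairings via Hilbert symbols, together with the more detailed computation in \cite{BCGKPT}. Your argument is instead a global naturality argument: you identify $h$ with the dual $\iota^\vee$ of the coefficient map $H^2(X,\Z)/n\hookrightarrow H^2(X,\Z/n\Z)$, and then compare $g(x)$ and $\kappa(x)$ as functionals on $H^2(X,\Z)/n$ using only functoriality of cup products and compatibility of the invariant maps for $\mu_n$ and $\Gm$. This is cleaner and more self-contained than deferring to local computations. The one point to flag is that you are invoking Artin--Verdier duality for the non-torsion sheaf $\Z$, namely that the cup product $H^2(X,\Z)\times H^1(X,\Gm)\to H^3(X,\Gm)\simeq\Q/\Z$ is a perfect pairing identifying $\Cl(X)$ with $H^2(X,\Z)^\vee$; this is true (and equivalent, via the Bockstein isomorphism $H^2(X,\Z)\simeq H^1(X,\Q/\Z)$ and class field theory, to the isomorphism $H^1(X,\Q/\Z)\simeq\Cl(X)^\vee$), but since the standard statement of Artin--Verdier duality is for torsion sheaves, it would be worth saying a word about why the pairing is still perfect here.
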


\begin{proof}  The first part of the statement is standard. In fact, we have $i: \calL\otimes_{\O_F}F\simeq F\cdot \xi^{1/n}\simeq F$ and, by definition, $I=i(\calL)$ and so $\Psi(x)=[I(x)]$.

The rest of the statement of the lemma follows from Artin-Verdier duality, the computation of  the group $H^2(X,\Z/n\Z)^\vee\simeq \Ext^1_X(\Z/n\Z, \Gm)$, and of the local duality pairings via Hilbert symbols,
in \cite{Maz}, see p. 540--541 and p. 550--551. A more detailed statement 
appears in \cite{BCGKPT}.
\end{proof}

It now follows that $D:   H^1(X, \Z/n\Z)\to H^1(X, \Z/n\Z)^\vee$
is the map
\[
x\mapsto (x'\mapsto {\rm {Ar}}(x')([I(x)]))\in {\Hom}(H^1(X, \Z/n\Z), \Z/n\Z),
\]
where ${\rm {Ar}}({x'}): \Cl(X)\to \Z/n\Z$ is the Artin (reciprocity) homomorphism
associated to the $\Z/n\Z$-torsor given by $x'$. 
The statement of the theorem follows.
\end{proof}

\begin{rem}$~$
\begin{enumerate}[a)]
\item
It follows from the above description of the map 
\[
H^1(X,\Z/n\Z)\xrightarrow{D} H^1(X,\Z/n\Z)^\vee\simeq \Ext^2_X(\Z/n\Z, \Gm)\simeq \Cl(X)/n
\]
that the group of $n$-homologically trivial ideal classes in $\Cl(X)/n$ coincides with the 
image of the class invariant homomorphism $\Psi$ in $\Cl(X)/n$. 
In the theory of Galois module structure, ideal classes which are in the image of the class invariant homomorphism are often called `realisable'. 

\item
Assuming $\mu_{n^2}\subset F$, the class invariant pairing can be viewed as a canonical symmetric tensor
\[
c(F, n)\in {\rm TS}^2_{\Z/n\Z}(\Cl(F)/n):=(\Cl(F)/n\otimes \Cl(F)/n)^{S_2}.
\]
It would be interesting to study this tensor and its variation in families of number fields.
\end{enumerate}
\end{rem}

\section*{Acknowledgments}
The authors are very grateful to Kai Behrend, Frauke Bleher, Ted Chinburg,  Tudor Dimofte, Ralph Greenberg, Andre Henriques, Mahesh Kakde, Effie Kalfagianni, Mikhail Kapranov, Kobi Kremintzer,  Graeme Segal, Romyar Sharifi, Martin Taylor, and Roland van der Veen for conversations and encouragement.
\ms

M.K. was supported in part by the EPSRC grant EP/M024830/1. 

G.P. was  supported in part by NSF Grant No. DMS-1360733.

J.P. was supported  in part by the Samsung Science \& Technology Foundation (SSTF-BA1502- 03).

H.Y. was supported in part by the Institute for Basic Sciences grant IBS-R003-D1.

\bibliographystyle{annotation}

\end{document}